   \numberwithin{equation}{section}
\newtheorem{thm}{Theorem}[section]
\newtheorem{lem}[thm]{Lemma}
\newtheorem{defn}[thm]{Definition}
\begin{document}
\begin{frontmatter}

\title{Nonminimal De Rham-Hodge Operators and Non-commutative Residue }
\begin{abstract}

In this paper, we get a Kastler-Kalau-Walze type theorem associated to nonminimal de Rham-Hodge operators on compact manifolds with  boundary.
We give two kinds of operator-theoretic explanations of the gravitational action in the case
of four dimensional compact manifolds with flat boundary.
\end{abstract}
\begin{keyword}
Nonminimal de Rham-Hodge operator; lower-dimensional volume; Noncommutative residue.
\MSC[2000] 53G20, 53A30, 46L87
\end{keyword}
\end{frontmatter}
\section{Introduction}
\label{1}
The noncommutative residue found in \cite{Gu,Wo1} plays a prominent role in noncommutative geometry.
For one dimensional manifolds, the noncommutative residue was discovered by Adler \cite{MA}
 in connection with geometric aspects of nonlinear partial differential equations.
In \cite{Co1}, Connes used the noncommutative residue to derive a conformal four dimensional Polyakov action analogy. Moreover, in \cite{Co2},
Connes made a challenging observation that the noncommutative residue of the square of the inverse of the Dirac operator was proportional to
the Einstein-Hilbert action, which we call the Kastler-Kalau-Walze theorem.
In \cite{Ka}, Kastler gave a brute-force proof of this theorem. Then Kalau and Walze also gave a proof of this theorem by using normal coordinates
 \cite{KW}.

In \cite{FGLS}, Fedosov etc. defined a noncommutative residue on Boutet de Monvel's algebra and proved that it was a
unique continuous trace. In \cite{S}, Schrohe gave the relation between the Dixmier trace and the noncommutative residue for
manifolds with boundary.  For an oriented spin manifold  $M$ with boundary $\partial M$,  by the composition formula in Boutet de Monvel's algebra and
the definition of $ \widetilde{{\rm Wres}}$ \cite{Wa1},  $\widetilde{{\rm Wres}}[(\pi^+D^{-1})^2]$ should be the sum of two terms from
interior and boundary of $M$, where $\pi^+D^{-1}$ is an element in Boutet de Monvel's algebra  \cite{Wa1}.
 For lower-dimension spin manifolds with boundary and the associated Dirac operators, Wang computed the lower dimensional volume and got
 a Kastler-Kalau-Walze type theorem in \cite{Wa4}, \cite{Wa3} and \cite{Wa2}. In\cite{GBF}, Gilkey, Branson and Fulling obtained a formula about
  heat kernel expansion coefficients of nonminimal operators. In \cite{WW}, we considered  the non-commutative residue of nonminimal operators
   and got the Kastler-Kalau-Walze type theorems for  nonminimal operators.
 The motivation of this paper is to generalize Theorem 2.2 in \cite{WW} to manifolds with  boundary in the four dimensional case.
For this purpose, we introduce the nonminimal de Rham-Hodge operators  $\tilde{D}=a d+b\delta $
 and nonminimal  laplacian  operators  $\tilde{D}\tilde{D^{*}}=a^{2} d\delta+b^{2}\delta d $. Our main result is as follows:

\textbf{Theorem 3.5} The following equallity holds:
\begin{equation*}
\widetilde{{\rm
Wres}}[\pi^+\tilde{D}^{-1} \circ \pi^+(\tilde{D^{*}})^{-1} ]
=4\pi\int_{M}\sum _{k=0} ^{4}c_{1}(4,k,a,b)R\texttt{d}vol(M)
-\frac{23}{12} (\frac{1}{a^{2}}+\frac{1}{b^{2}} )\pi\int_{\partial M} K  \Omega_{3}\texttt{d}x',
\end{equation*}
(for related definition, see Section 3).

  This paper is organized as follows: In Section 2, we define lower dimensional volumes of manifolds with boundary for nonminimal
  De-Rham Hodge operators. In Section 3, for four dimensional compact manifolds with boundary and  the associated nonminimal De-Rham Hodge operators
  $\tilde{D}=a d+b\delta $ and $\tilde{D^{*}}=b d+a\delta $,
 we compute the lower dimensional volume ${\rm Vol}^{(1,1)}_4$ and get a Kastler-Kalau-Walze type theorem in this case.
 In Section 4, for four dimensional compact manifolds with boundary and the associated nonminimal
De-Rham Hodge operators $\tilde{D}=a d+b\delta $, we compute the lower dimensional volume.
 When $\partial M$ is flat, we give two kinds of operator theoretic explanations of
the gravitational action on boundary.

\section{Lower dimensional volumes of Riemannian manifolds with boundary }
 In this section we consider an $n$-dimensional oriented Riemannian manifold $(M, g^{M})$ equipped
with some spin structure. Let $M$ be an $n$-dimensional compact oriented manifold with boundary $\partial M$.
 We assume that the metric $g^{M}$ on $M$ has
the following form near the boundary
 \begin{equation*}
 g^{M}=\frac{1}{h(x_{n})}g^{\partial M}+\texttt{d}x _{n}^{2} ,
\end{equation*}
where $g^{\partial M}$ is the metric on $\partial M$. Let $U\subset
M$ be a collar neighborhood of $\partial M$ which is diffeomorphic $\partial M\times [0,1)$. By the definition of $h(x_n)\in C^{\infty}([0,1))$
and $h(x_n)>0$, there exists $\tilde{h}\in C^{\infty}((-\varepsilon,1))$ such that $\tilde{h}|_{[0,1)}=h$ and $\tilde{h}>0$ for some
sufficiently small $\varepsilon>0$. Then there exists a metric $\hat{g}$ on $\hat{M}=M\bigcup_{\partial M}\partial M\times
(-\varepsilon,0]$ which has the form on $U\bigcup_{\partial M}\partial M\times (-\varepsilon,0 ]$
 \begin{equation*}
\hat{g}=\frac{1}{\tilde{h}(x_{n})}g^{\partial M}+\texttt{d}x _{n}^{2} ,
\end{equation*}
such that $\hat{g}|_{M}=g$. We fix a metric $\hat{g}$ on the $\hat{M}$ such that $\hat{g}|_{M}=g$.

Let $\nabla$ denote the Levi-civita connection about $g^M$.
 In the local coordinates $\{x_i; 1\leq i\leq n\}$ and the fixed orthonormal frame $\{\widetilde{e_1},\cdots,\widetilde{e_n}\}$,
 the connection matrix $(\omega_{s,t})$ is defined by
  \begin{equation*}
\nabla(\widetilde{e_1},\cdots,\widetilde{e_n})= (\widetilde{e_1},\cdots,\widetilde{e_n})(\omega_{s,t}).
\end{equation*}
Let $c( e_i)=\epsilon(e_i)-\iota(e_i),~~\hat{c}(\widetilde{e_i})=\epsilon(e_i)+\iota(e_i)$ .
Denote the exterior and interior multiplications by $\epsilon (e_j* ),~\iota (e_j* )$ respectively, denote
by  $d+\delta:~\wedge^*(T^*M)\rightarrow \wedge^*(T^*M)$  the signature operator.
By \cite{Y}, we have
  \begin{equation*}
\texttt{d}+\delta=\sum^n_{i=1}c(e_i)\Big[e_i+\frac{1}{4}\sum_{s,t}\omega_{s,t}
(e_i)\big[\hat{c}(e_s)\hat{c}(e_t)-c(e_s)c(e_t)\big]\Big].
\end{equation*}
 Then we define the nonminimal de Rham-Hodge operators  as
   \begin{equation*}
\tilde{D}=a\texttt{d}+b\delta,~~\tilde{D^{*}}=b\texttt{d}+a\delta,
\end{equation*}
where $\tilde{D^{*}}$ is the adjoint operator of $\tilde{D}$ and $ab\neq 0$.

To define the lower dimensional volume, some basic facts and formulae about Boutet de Monvel's calculus which can be found  in Sec.2
in \cite{Wa1} and \cite{BDM}
are needed. Let
\begin{equation*}
  F:L^2({\bf R}_t)\rightarrow L^2({\bf R}_v);~F(u)(v)=\int e^{-ivt}u(t)dt
  \end{equation*}
   denote the Fourier transformation and
$\Phi(\overline{{\bf R}^+}) =r^+\Phi({\bf R})$ (similarly define $\Phi(\overline{{\bf R}^-}$)), where $\Phi({\bf R})$
denotes the Schwartz space and
  \begin{equation*}
r^{+}:C^\infty ({\bf R})\rightarrow C^\infty (\overline{{\bf R}^+});~ f\rightarrow f|\overline{{\bf R}^+};~
 \overline{{\bf R}^+}=\{x\geq0;x\in {\bf R}\}.
\end{equation*}
We define $H^+=F(\Phi(\overline{{\bf R}^+}));~ H^-_0=F(\Phi(\overline{{\bf R}^-}))$ which are orthogonal to each other. We have the following
 property: $h\in H^+~(H^-_0)$ iff $h\in C^\infty({\bf R})$, which has an analytic extension to the lower (upper) complex
half-plane $\{{\rm Im}\xi<0\}~(\{{\rm Im}\xi>0\})$ such that for all nonnegative integer $l$,
 \begin{equation*}
\frac{d^{l}h}{d\xi^l}(\xi)\sim\sum^{\infty}_{k=1}\frac{d^l}{d\xi^l}(\frac{c_k}{\xi^k})
\end{equation*}
as $|\xi|\rightarrow +\infty,{\rm Im}\xi\leq0~({\rm Im}\xi\geq0)$.

 Let $H'$ be the space of all polynomials and $H^-=H^-_0\bigoplus H';~H=H^+\bigoplus H^-.$ Denote by $\pi^+~(\pi^-)$ respectively the
 projection on $H^+~(H^-)$. For calculations, we take $H=\widetilde H=\{$rational functions having no poles on the real axis$\}$ ($\tilde{H}$
 is a dense set in the topology of $H$). Then on $\tilde{H}$,
 \begin{equation}
\pi^+h(\xi_0)=\frac{1}{2\pi i}\lim_{u\rightarrow 0^{-}}\int_{\Gamma^+}\frac{h(\xi)}{\xi_0+iu-\xi}\texttt{d}\xi,
\end{equation}
where $\Gamma^+$ is a Jordan close curve included ${\rm Im}\xi>0$ surrounding all the singularities of $h$ in the upper half-plane and
$\xi_0\in {\bf R}$. Similarly, define $\pi^{'}$ on $\tilde{H}$,
 \begin{equation*}
\pi'h=\frac{1}{2\pi}\int_{\Gamma^+}h(\xi)\texttt{d}\xi.
\end{equation*}
So, $\pi'(H^-)=0$. For $h\in H\bigcap L^1(R)$, $\pi'h=\frac{1}{2\pi}\int_{R}h(v)dv$ and for $h\in H^+\bigcap L^1(R)$, $\pi'h=0$.

Denote by $\mathcal{B}$ Boutet de Monvel's algebra, we recall the main theorem in \cite{FGLS}.
\begin{thm}\label{th:32}{\bf(Fedosov-Golse-Leichtnam-Schrohe)}
 Let $X$ and $\partial X$ be connected, ${\rm dim}X=n\geq3$,
 $A=\left(\begin{array}{lcr}\pi^+P+G &   K \\
T &  S    \end{array}\right)$ $\in \mathcal{B}$ , and denote by $p$, $b$ and $s$ the local symbols of $P,G$ and $S$ respectively.
 Define:
 \begin{eqnarray}
{\rm{\widetilde{Wres}}}(A)&=&\int_X\int_{\bf S}{\rm{tr}}_E\left[p_{-n}(x,\xi)\right]\sigma(\xi)dx \nonumber\\
&&+2\pi\int_ {\partial X}\int_{\bf S'}\left\{{\rm tr}_E\left[({\rm{tr}}b_{-n})(x',\xi')\right]+{\rm{tr}}
_F\left[s_{1-n}(x',\xi')\right]\right\}\sigma(\xi')dx'.
\end{eqnarray}
Then~~ a) ${\rm \widetilde{Wres}}([A,B])=0 $, for any
$A,B\in\mathcal{B}$;~~ b) It is a unique continuous trace on
$\mathcal{B}/\mathcal{B}^{-\infty}$.
\end{thm}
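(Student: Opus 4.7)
The starting point is Theorem 2.1: apply it with $n=4$ and $A=\pi^+\tilde{D}^{-1}\circ\pi^+(\tilde{D}^{*})^{-1}\in\mathcal{B}$, which splits $\widetilde{{\rm Wres}}(A)$ into an interior integral and a boundary integral. The interior integral produces the scalar curvature term $4\pi\int_M\sum_k c_1(4,k,a,b)R\,\mathrm{d}\mathrm{vol}(M)$, while the boundary integral, controlled by the singular Green operator symbol created by the two $\pi^+$ truncations, produces the extrinsic curvature term $-\frac{23}{12}(\tfrac{1}{a^2}+\tfrac{1}{b^2})\pi\int_{\partial M}K\Omega_3\,\mathrm{d}x'$.

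\textbf{Interior contribution.} For the interior term I would use the fact that $\tilde{D}\tilde{D}^{*}=a^{2}d\delta+b^{2}\delta d$ is a nonminimal Laplacian already analysed on closed manifolds in Theorem 2.2 of \cite{WW}. The composition $\tilde{D}^{-1}(\tilde{D}^{*})^{-1}$ is a pseudodifferential operator of order $-2$; expanding its full symbol via the standard parametrix construction (leading symbol from the inverse of the principal symbol of $\tilde{D}\tilde{D}^{*}$, together with the subleading correction built from the connection matrix $(\omega_{s,t})$ appearing in the formula for $d+\delta$) gives the $(-4)$-homogeneous piece $p_{-4}$. Integrating $\mathrm{tr}\,p_{-4}$ against the round measure on the fibre sphere $S^{3}$ reduces to a contraction involving the Ricci tensor, which collapses to $R$ times the tabulated coefficients $c_{1}(4,k,a,b)$; the volume $\Omega_{3}$ of $S^{3}$ absorbs into the $4\pi$ prefactor.

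\textbf{Boundary contribution.} In a collar $\partial M\times[0,1)$ we use the warped-product metric $g^{M}=\frac{1}{h(x_n)}g^{\partial M}+\mathrm{d}x_n^{2}$ stated in Section 2, which extends to $\hat g$ on $\hat M$. The composition formula in Boutet de Monvel's calculus gives
\begin{equation*}
\pi^{+}\tilde{D}^{-1}\circ\pi^{+}(\tilde{D}^{*})^{-1}=\pi^{+}\bigl(\tilde{D}^{-1}(\tilde{D}^{*})^{-1}\bigr)+G,
\end{equation*}
where $G$ is a singular Green operator whose symbol $b$ is expressible, via (2.1), as a sum of $\pi^{+}$ and $\pi^{'}$ projections applied to products of the symbols of $\tilde{D}^{-1}$ and $(\tilde{D}^{*})^{-1}$ factored into their $H^{+}$ and $H^{-}$ components with respect to the normal variable $\xi_{n}$. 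Expanding both $\tilde{D}^{-1}$ and $(\tilde{D}^{*})^{-1}$ in the coframe $\{\widetilde e_i\}$, freezing $x'\in\partial M$ and collecting all contributions to $(\mathrm{tr}\,b_{-n})(x',\xi')$ of total homogeneity $-n=-4$, I would evaluate the resulting Cauchy-type contour integrals by residues inside $\Gamma^{+}$ and then integrate over the cotangent sphere $S'=S^{*}\partial M$.

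\textbf{Main obstacle.} The hard part is the boundary calculation. The parameters $a,b$ enter both operators through the Clifford pieces $c(e_i)$ and $\hat c(e_i)$, so the product $\tilde{D}^{-1}(\tilde{D}^{*})^{-1}$ generates many cross terms, and the $\pi^{+}\pi^{-}$ projections from (2.1) must be computed on each. The expected coefficient $\tfrac{1}{a^{2}}+\tfrac{1}{b^{2}}$ (symmetric under $a\leftrightarrow b$) strongly suggests large cancellations between mixed terms; producing these cancellations and then extracting the rational constant $-\tfrac{23}{12}$ from the final $S^{3}\subset T^{*}\hat M|_{\partial M}$ integration is the central technical step. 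Once the boundary integrand is identified with a multiple of the mean curvature $K$ (via the derivative of $h$ at $x_n=0$ entering the connection matrix), combining with the interior contribution yields exactly the claimed formula.
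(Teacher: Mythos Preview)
Your proposal does not address the stated theorem. The statement you were asked to prove is Theorem~2.1, the Fedosov--Golse--Leichtnam--Schrohe theorem: the assertion that the functional $\widetilde{\mathrm{Wres}}$ on Boutet de Monvel's algebra $\mathcal{B}$ vanishes on commutators and is the unique continuous trace on $\mathcal{B}/\mathcal{B}^{-\infty}$. The paper does not prove this theorem at all; it simply quotes it from \cite{FGLS} as background. Your proposal instead sketches a proof of Theorem~3.5, the Kastler--Kalau--Walze type formula for $\widetilde{\mathrm{Wres}}[\pi^{+}\tilde{D}^{-1}\circ\pi^{+}(\tilde{D}^{*})^{-1}]$ on a four-manifold with boundary.

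Worse, your very first sentence invokes Theorem~2.1 as the ``starting point,'' so even read charitably the argument is circular with respect to the stated target. If the intended target really is Theorem~2.1, you would need the machinery of \cite{FGLS}: the analysis of the singular Green symbol class, the identification of the leading symbol trace, and the uniqueness argument via the algebraic structure of $\mathcal{B}/\mathcal{B}^{-\infty}$. None of that appears in your proposal. If instead you meant to prove Theorem~3.5, then your outline is broadly aligned with the paper's approach (interior term via \cite{WW}, boundary term via the case-by-case expansion of formula~(2.5) and residue calculus in $\xi_{n}$), but you have attached it to the wrong statement.
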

Let $p_{1},p_{2}$ be nonnegative integers and $p_{1}+p_{2}\leq n$. Then by Sec 2.1 of \cite{Wa3},  we have
\begin{defn} Lower-dimensional volumes of Riemannian manifolds with boundary  are defined by
   \begin{equation}\label{}
  {\rm Vol}^{(p_1,p_2)}_nM:=\widetilde{{\rm Wres}}[\pi^+\tilde{D}^{-p_1}\circ\pi^+(\tilde{D^{*}})^{-p_2}].
\end{equation}
\end{defn}

 Denote by $\sigma_{l}(A)$ the $l$-order symbol of an operator A. For $n$ dimensional Riemannian manifolds with boundary,
 an application of (2.1.4) in \cite{Wa1} shows that
\begin{equation}
\widetilde{{\rm Wres}}[\pi^+\tilde{D}^{-p_1}\circ\pi^+(\tilde{D^{*}})^{-p_2}]=\int_M\int_{|\xi|=1}{\rm
trace}_{\Lambda^{*}(T^{*}M)}[\sigma_{-n}(\tilde{D}^{-p_1}  (\tilde{D^{*}})^{-p_2})]\sigma(\xi)\texttt{d}x+\int_{\partial
M}\Phi,
\end{equation}
where
 \begin{eqnarray}
\Phi&=&\int_{|\xi'|=1}\int^{+\infty}_{-\infty}\sum^{\infty}_{j, k=0}
\sum\frac{(-i)^{|\alpha|+j+k+1}}{\alpha!(j+k+1)!}
 {\rm trace}_{\Lambda^{*}(T^{*}M)}
\Big[\partial^j_{x_n}\partial^\alpha_{\xi'}\partial^k_{\xi_n}
\sigma^+_{r}(\tilde{D}^{-p_1})(x',0,\xi',\xi_n)\nonumber\\
&&\times\partial^\alpha_{x'}\partial^{j+1}_{\xi_n}\partial^k_{x_n}\sigma_{l}
((\tilde{D^{*}})^{-p_2})(x',0,\xi',\xi_n)\Big]d\xi_n\sigma(\xi')\texttt{d}x',
\end{eqnarray}
and the sum is taken over $r-k+|\alpha|+\ell-j-1=-n,r\leq-p_{1},\ell\leq-p_{2}$.

\section{A Kastler-Kalau-Walze type theorem of nonminimal de Rham-Hodge operators $\tilde{D}$ and $\tilde{D^{*}}$}
 \label{4}
In this section, we compute the lower dimension volume for four dimension compact connected manifolds with boundary
associated to nonminimal de Rham-Hodge operators $\tilde{D}$ and $\tilde{D^{*}}$
and get a Kastler-Kalau-Walze type formula in this case.
Let $M$ be an four dimensional compact oriented connected manifold with boundary $\partial M$, and the metric $g^{M}$ on $M$ as above.
Note that
   \begin{equation*}
\tilde{D}\tilde{D^{*}}=a^{2}\texttt{d}\delta+b^{2}\delta \texttt{d}
\end{equation*}
is a nonminimal operator on $C^{\infty}({\Lambda^{*}(T^{*}M)})$,
then $[\sigma_{-4}((\tilde{D}\tilde{D^{*}})^{-1})]|_{M}$ has the same expression with the case of  without boundary in \cite{WW},
so locally we can use Theorem 2.2 in \cite{WW} to compute the first term. Therefore
\begin{equation}
\int_{M}\int_{|\xi|=1}
  \text{trace}_{\Lambda^{*}(T^{*}M)}[\sigma_{-4}((\tilde{D}\tilde{D^{*}})^{-1})]\sigma(\xi)\text{d}x
=4\pi\int_{M}\sum _{k=0} ^{4}c_{1}(4,k,a,b)R\texttt{d}vol(M),
\end{equation}
where $R$ is the scalar curvature and
$c_{1}(4,k,a,b)=b^{-2}\{\frac{1}{6}(_{k}^{4})-(_{k-1}^{2})\}
+(b^{-2}-a^{-2})\sum_{j<k}(-1)^{j-k}\{\frac{1}{6}(_{j}^{4})-(_{j-1}^{2})\}.$

Hence we only need to compute $\int_{\partial M}\Phi$.
Firstly, we compute the symbol $\sigma(\tilde{D}^{-1})$ and $\sigma((\tilde{D^{*}})^{-1})$.
Denote by  $\tilde{c}(\xi)=a\epsilon(\xi)-b\iota(\xi)$, $\bar{c}(\xi)=b\epsilon(\xi)-a\iota(\xi) $,
 then we have $\tilde{c}(e_j*)=a\epsilon(e_j*)-b\iota(e_j*)$ and $\bar{c}(e_j*)=b\epsilon(e_j*)-a\iota(e_j*) $.
From the form of Signature operator
\begin{equation}
\tilde{D}=\sum^n_{i=1}\tilde{c}(e_i)\Big[ e_i+\sum_{s,t}\omega_{s,t}
(e_i)\Big(\hat{c}(e_s)\hat{c}(e_t)-c(e_s)c(e_t)\Big)\Big],
\end{equation}
we get
\begin{eqnarray}
\sigma_1(\tilde{D})&=&\sqrt{-1}\tilde{c}(\xi); \\
 \sigma_0(\tilde{D})&=&\frac{1}{4}\sum_{i,s,t}\omega_{s,t}
(e_i)\tilde{c}(e_i)[\hat{c}(e_s)\hat{c}(e_t)-c(e_s)c(e_t)],
\end{eqnarray}
where $\xi=\sum^n_{i=1}\xi_idx_i$ denotes the cotangent vector.
Write
\begin{equation}
\tilde{D}_x^{\alpha}=(-\sqrt{-1})^{|\alpha|}\partial_x^{\alpha};~\sigma(\tilde{D})=p_1+p_0;
~\sigma(\tilde{D}^{-1})=\sum^{\infty}_{j=1}q_{-j}.
\end{equation}
By the composition formula of psudodifferential operators,  we have
\begin{eqnarray*}
1=\sigma(\tilde{D}\circ \tilde{D}^{-1})&=&\sum_{\alpha}\frac{1}{\alpha!}\partial^{\alpha}_{\xi}[\sigma(\tilde{D})]D^{\alpha}_{x}[\sigma(\tilde{D}^{-1})]\\
&=&(p_1+p_0)(q_{-1}+q_{-2}+q_{-3}+\cdots)\\
& &~~~+\sum_j(\partial_{\xi_j}p_1+\partial_{\xi_j}p_0)(
D_{x_j}q_{-1}+D_{x_j}q_{-2}+D_{x_j}q_{-3}+\cdots)\\
&=&p_1q_{-1}+(p_1q_{-2}+p_0q_{-1}+\sum_j\partial_{\xi_j}p_1D_{x_j}q_{-1})+\cdots.
\end{eqnarray*}
Thus, we obtain
\begin{equation}
q_{-1}=p_1^{-1};~q_{-2}=-p_1^{-1}[p_0p_1^{-1}+\sum_j\partial_{\xi_j}p_1D_{x_j}(p_1^{-1})].
\end{equation}
By (3.2), (3.5) and direct computations, we have

\begin{lem} Let $\tilde{D}$, $\tilde{D^{*}}$ on $C^{\infty}({\Lambda^{*}(T^{*}M)})$. Then
\begin{eqnarray}
&&q_{-1}(\tilde{D^{-1}})=\frac{\sqrt{-1}\tilde{c}(\xi)}{ab|\xi|^2};\\
&&q_{-2}(\tilde{D^{-1}})=\frac{\tilde{c}(\xi) \sigma_0(\tilde{D})\tilde{c}(\xi)}{a^{2}b^{2}|\xi|^4}
+\frac{\tilde{c}(\xi)}{a^{2}b^{2}|\xi|^6}\sum_j\tilde{c}(dx_j)\Big[\partial_{x_j}[\tilde{c}(\xi)]|\xi|^2-\tilde{c}(\xi)\partial_{x_j}(|\xi|^2)\Big]; \\
&&q_{-1}((\tilde{D^{*}})^{-1}))=\frac{\sqrt{-1}\bar{c}(\xi)}{ab|\xi|^2};\\
&&q_{-2}((\tilde{D^{*}})^{-1}))=\frac{\bar{c}(\xi) \sigma_0(\tilde{D^{*}})\bar{c}(\xi)}{a^{2}b^{2}|\xi|^4}
+\frac{\bar{c}(\xi)}{a^{2}b^{2}|\xi|^6}\sum_j\bar{c}(dx_j)\Big[\partial_{x_j}[\bar{c}(\xi)]|\xi|^2-\bar{c}(\xi)\partial_{x_j}(|\xi|^2)\Big],
\end{eqnarray}
where
\begin{eqnarray}
\tilde{ p_{0}}=\sigma_0(\tilde{D})(x_0)&=&-\frac{1}{4}h'(0)\sum^{n-1}_{i=1}\tilde{c}(e_i)\hat{c}(e_i)\hat{c}(e_n)(x_0)+
 \frac{1}{4}h'(0)\sum^{n-1}_{i=1}\tilde{c}(e_i)c(e_i)c(e_n)(x_0); \\
\bar{ p_{0}}=\sigma_0(\tilde{D^{*}})(x_0)&=&-\frac{1}{4}h'(0)\sum^{n-1}_{i=1}
\bar{c}(e_i)\hat{c}(e_i)\hat{c}(e_n)(x_0)+
 \frac{1}{4}h'(0)\sum^{n-1}_{i=1}\bar{c}(e_i)c(e_i)c(e_n)(x_0).
\end{eqnarray}
\end{lem}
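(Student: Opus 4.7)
The plan is to derive all four symbol formulas directly from the identities $q_{-1}=p_1^{-1}$ and $q_{-2}=-p_1^{-1}\bigl[p_0p_1^{-1}+\sum_j\partial_{\xi_j}p_1\,D_{x_j}(p_1^{-1})\bigr]$ already established in (3.6). The single algebraic input required is that $\tilde c(\xi)$ and $\bar c(\xi)$ square to scalars. Expanding $\tilde c(\xi)^2=(a\epsilon(\xi)-b\iota(\xi))^2$ and using $\epsilon(\xi)^2=\iota(\xi)^2=0$ together with the canonical anticommutation $\epsilon(\xi)\iota(\xi)+\iota(\xi)\epsilon(\xi)=|\xi|^2$ yields $\tilde c(\xi)^2=-ab|\xi|^2$; the same calculation gives $\bar c(\xi)^2=-ab|\xi|^2$. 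Consequently $p_1^{-1}=(\sqrt{-1}\tilde c(\xi))^{-1}=\sqrt{-1}\tilde c(\xi)/(ab|\xi|^2)$, which is (3.7), and the analogous identity gives (3.9).

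For $q_{-2}(\tilde D^{-1})$, I substitute this $p_1^{-1}$ into (3.6). The term $-p_1^{-1}p_0 p_1^{-1}$ immediately becomes $\tilde c(\xi)\sigma_0(\tilde D)\tilde c(\xi)/(a^2b^2|\xi|^4)$ because the two $\sqrt{-1}$'s combine with the outer minus sign to leave $+1$. For the sum over $j$, I use $\partial_{\xi_j}p_1=\sqrt{-1}\tilde c(dx_j)$ (by linearity of $\tilde c$ in $\xi$) and $D_{x_j}p_1^{-1}=-\sqrt{-1}\,\partial_{x_j}p_1^{-1}$; the quotient rule then turns $\partial_{x_j}\bigl(\tilde c(\xi)/|\xi|^2\bigr)$ into $\bigl[\partial_{x_j}[\tilde c(\xi)]|\xi|^2-\tilde c(\xi)\partial_{x_j}(|\xi|^2)\bigr]/|\xi|^4$. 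Collecting all prefactors yields the coefficient $1/(a^2b^2|\xi|^6)$ in front and produces exactly (3.8). The derivation of (3.10) is word-for-word the same with $\tilde c$ replaced by $\bar c$ and $\sigma_0(\tilde D)$ by $\sigma_0(\tilde D^*)$.

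The remaining task is to evaluate $\sigma_0(\tilde D)$ at a chosen boundary point $x_0\in\partial M$ starting from (3.4). I choose normal coordinates for $g^{\partial M}$ centered at the foot of $x_0$ and take the orthonormal frame $\{e_1,\dots,e_{n-1},e_n=\partial_{x_n}\}$ associated with the warped product $g^M=h(x_n)^{-1}g^{\partial M}+dx_n^2$. A direct Koszul computation, using that the tangential Christoffel symbols of $g^{\partial M}$ vanish at the foot of $x_0$, shows that the only surviving connection coefficients of $g^M$ at $x_0$ are the radial ones $\omega_{i,n}(e_j)=\pm\tfrac12 h'(0)\delta_{ij}$ for $1\le i,j\le n-1$. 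Inserting these into (3.4), the antisymmetries of $\omega_{s,t}$ and of $\hat c(e_s)\hat c(e_t)-c(e_s)c(e_t)$ in $(s,t)$ combine to double the $s<t$ contribution, and only indices with $s=i$, $t=n$, and $e_j=e_i$ remain; this collapses the triple sum into the two single sums appearing in (3.11). Replacing $\tilde c$ by $\bar c$ throughout gives (3.12).

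The main obstacle will be the boundary connection-form computation: careful bookkeeping of the sign of $\omega_{i,n}(e_i)$ under the chosen Koszul convention, and verification that the factor $\tfrac{1}{4}$ in (3.4), the factor of $2$ from the $(s,t)\leftrightarrow(t,s)$ symmetry, and the factor $\tfrac{1}{2}h'(0)$ from the connection combine correctly into the prefactor $\tfrac{1}{4}h'(0)$ (with the correct signs on the $\hat c\hat c$ and $cc$ pieces) in (3.11) and (3.12). Everything else reduces to substitution into the composition formula (3.6) and the Clifford-type identity $\tilde c(\xi)^2=\bar c(\xi)^2=-ab|\xi|^2$.
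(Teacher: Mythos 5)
Your proposal is correct and follows exactly the route the paper intends (the paper itself only says ``by (3.2), (3.5) and direct computations''): invert $p_1=\sqrt{-1}\,\tilde c(\xi)$ via the identity $\tilde c(\xi)^2=\bar c(\xi)^2=-ab|\xi|^2$, substitute into $q_{-2}=-p_1^{-1}[p_0p_1^{-1}+\sum_j\partial_{\xi_j}p_1 D_{x_j}(p_1^{-1})]$, and evaluate $\sigma_0$ at $x_0$ using the connection coefficients $\omega_{n,i}(\widetilde{e_i})(x_0)=\tfrac12 h'(0)$, $\omega_{i,n}(\widetilde{e_i})(x_0)=-\tfrac12 h'(0)$ (which the paper quotes from Lemma 3.3 rather than rederiving, but your Koszul computation and sign bookkeeping reproduce them and the prefactor $\tfrac14 h'(0)$ correctly). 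No gaps.
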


Since $\Phi$ is a global form on $\partial M$, so for any fixed point $x_{0}\in\partial M$, we can choose the normal coordinates
$U$ of $x_{0}$ in $\partial M$(not in $M$) and compute $\Phi(x_{0})$ in the coordinates $\widetilde{U}=U\times [0,1)$ and the metric
$\frac{1}{h(x_{n})}g^{\partial M}+\texttt{d}x _{n}^{2}$. The dual metric of $g^{M}$ on $\widetilde{U}$ is
$h(x_{n})g^{\partial M}+\texttt{d}x _{n}^{2}.$ Write
$g_{ij}^{M}=g^{M}(\frac{\partial}{\partial x_{i}},\frac{\partial}{\partial x_{j}})$;
$g^{ij}_{M}=g^{M}(d x_{i},dx_{j})$, then

\begin{equation*}
[g_{i,j}^{M}]=
\begin{bmatrix}\frac{1}{h( x_{n})}[g_{i,j}^{\partial M}]&0\\0&1\end{bmatrix};\quad
[g^{i,j}_{M}]=\begin{bmatrix} h( x_{n})[g^{i,j}_{\partial M}]&0\\0&1\end{bmatrix},
\end{equation*}
and
\begin{equation}
\partial_{x_{s}} g_{ij}^{\partial M}(x_{0})=0,\quad 1\leq i,j\leq n-1;\quad g_{i,j}^{M}(x_{0})=\delta_{ij}.
\end{equation}
By Lemma 2.2 in \cite{Wa3}  and  the normal coordinates $U$ of $x_{0}$ in $\partial M$(not in $M$), we have
\begin{lem}\label{le:32}
With the metric $g^{M}$ on $M$ near the boundary
\begin{eqnarray}
\partial_{x_j}(|\xi|_{g^M}^2)(x_0)&=&\left\{
       \begin{array}{c}
        0,  ~~~~~~~~~~ ~~~~~~~~~~ ~~~~~~~~~~~~~{\rm if }~j<n; \\[2pt]
       h'(0)|\xi'|^{2}_{g^{\partial M}},~~~~~~~~~~~~~~~~~~~~{\rm if }~j=n,
       \end{array}
    \right. \\
\partial_{x_j}[\tilde{c}(\xi)](x_0)&=&\left\{
       \begin{array}{c}
      0,  ~~~~~~~~~~ ~~~~~~~~~~ ~~~~~~~~~~~~~{\rm if }~j<n;\\[2pt]
\partial x_{n}(\tilde{c}(\xi'))(x_{0}), ~~~~~~~~~~~~~~~~~{\rm if }~j=n,
       \end{array}
    \right.
\end{eqnarray}
where $\xi=\xi'+\xi_{n}\texttt{d}x_{n}$.
\end{lem}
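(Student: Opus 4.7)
The proof is a direct computation that leverages two inputs: the explicit product form $g^M=\frac{1}{h(x_n)}g^{\partial M}+\texttt{d}x_n^2$ in the collar neighbourhood, and the normal-coordinate identity $\partial_{x_s}g^{\partial M}_{ij}(x_0)=0$ for $s<n$ recorded in (3.13). My plan is to dispose of the statement about $|\xi|_{g^M}^2$ first, then reduce the statement about $\tilde{c}(\xi)$ to essentially the same calculation by splitting off the normal component $\xi_n\texttt{d}x_n$.

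For the first identity I would read off the block decomposition of $[g^{ij}_M]$ displayed just above (3.13) to obtain $|\xi|_{g^M}^2=g^{ij}_M\xi_i\xi_j=h(x_n)|\xi'|^2_{g^{\partial M}}+\xi_n^2$. For $j<n$ the factor $h(x_n)$ does not depend on $x_j$ and $\partial_{x_j}g^{ij}_{\partial M}(x_0)=0$ by (3.13), so the derivative vanishes; for $j=n$ only $h$ contributes and one picks up $h'(0)|\xi'|^2_{g^{\partial M}}$. For the second identity, the key point is that $\tilde{c}(\xi)=a\epsilon(\xi)-b\iota(\xi)$ depends on $x$ only through the inner product implicit in $\iota$, so I would decompose $\tilde{c}(\xi)=\tilde{c}(\xi')+\xi_n\tilde{c}(\texttt{d}x_n)$. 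The product form of $g^M$ forces $g^M(\texttt{d}x_n,\texttt{d}x_n)\equiv 1$ and $g^M(\texttt{d}x_n,\texttt{d}x_i)\equiv 0$ for $i<n$ throughout the collar, so $\tilde{c}(\texttt{d}x_n)$ is constant in $x$; thus only the tangential term $\tilde{c}(\xi')$ contributes, and its $x$-dependence is inherited from $g^{\partial M}$ and $h(x_n)$. Applying (3.13) again kills all $\partial_{x_j}$ contributions at $x_0$ for $j<n$, while the $x_n$-derivative survives and yields $\partial_{x_n}(\tilde{c}(\xi'))(x_0)$.

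The one observation that deserves care is that the unit normal covector $\texttt{d}x_n$ carries no $x$-dependence under $\tilde{c}$; this is what permits the normal-coordinate cancellation to be applied cleanly in the tangential directions, and it is where the assumed product structure of $g^M$ near $\partial M$ is essential. Once this is noted, Lemma \ref{le:32} becomes a direct analogue of Lemma 2.2 of \cite{Wa3} (with $c$ replaced by $\tilde{c}$), and no deeper input is required.
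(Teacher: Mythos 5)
Your proof is correct and is essentially the computation the paper intends: the paper gives no argument of its own but simply invokes Lemma 2.2 of \cite{Wa3} together with the normal coordinates on $\partial M$, and your direct verification via the block form $|\xi|^2_{g^M}=h(x_n)|\xi'|^2_{g^{\partial M}}+\xi_n^2$, the splitting $\tilde{c}(\xi)=\tilde{c}(\xi')+\xi_n\tilde{c}(\texttt{d}x_n)$, and the vanishing $\partial_{x_s}g^{\partial M}_{ij}(x_0)=0$ is exactly what underlies that citation. The observation that $\tilde{c}(\texttt{d}x_n)$ is $x$-independent because the metric is a product in the normal direction is the right point to isolate, and the rest follows as you describe.
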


\begin{lem} \cite{Wa3}
 When $ i<n,~\omega_{n,i}(\widetilde{e_i})(x_0)=\frac{1}{2}h'(0);$
and $\omega_{i,n}(\widetilde{e_i})(x_0)=-\frac{1}{2}h'(0),$
{\it In other cases,} $\omega_{s,t}(\widetilde{e_i})(x_0)=0$.
\end{lem}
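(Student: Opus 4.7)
The plan is to reduce the computation to the Christoffel symbols of $g^M$ at $x_0$ and then apply $\omega_{s,t}(\widetilde{e_i})(x_0) = \langle \nabla^M_{\widetilde{e_i}} \widetilde{e_t}, \widetilde{e_s}\rangle(x_0)$, exploiting the warped-product form $g^M = \frac{1}{h(x_n)} g^{\partial M} + \texttt{d}x_n^2$ and the normal coordinates on $\partial M$ at $x_0$. First I would observe that, because $\partial_{x_s} g^{\partial M}_{ij}(x_0)=0$ for $s,i,j<n$ and $h$ depends only on $x_n$, the only first derivative of $g^M$ surviving at $x_0$ is $\partial_{x_n} g^M_{ij}(x_0) = -h'(0)\delta_{ij}$ for $i,j<n$ (under the natural normalization $h(0)=1$, consistent with Lemma \ref{le:32}). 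Plugging into the standard Christoffel formula, the only non-vanishing symbols at $x_0$ are
\begin{equation*}
\Gamma^n_{ij}(x_0) = \tfrac{1}{2} h'(0)\,\delta_{ij}, \qquad \Gamma^k_{in}(x_0) = \Gamma^k_{ni}(x_0) = -\tfrac{1}{2} h'(0)\,\delta_{ik}, \qquad (i,j,k<n).
\end{equation*}

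Next I would introduce an orthonormal frame adapted to the boundary: set $\widetilde{e_n} = \partial_{x_n}$ and, for $i<n$, $\widetilde{e_i}(x',x_n) = \sqrt{h(x_n)}\, e_i^{\partial M}(x')$, where $\{e_i^{\partial M}\}_{i<n}$ is the $g^{\partial M}$-Gram--Schmidt orthonormalization of $\{\partial_{x_1},\dots,\partial_{x_{n-1}}\}$. A direct check shows this frame is $g^M$-orthonormal; by the normal-coordinate hypothesis, $\widetilde{e_\alpha}(x_0)=\partial_{x_\alpha}(x_0)$ and $\partial_{x_j} e_i^{\partial M}(x_0)=0$ for $j<n$.

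The final step is to compute $\nabla^M_{\widetilde{e_i}} \widetilde{e_t}(x_0)$ and pair with $\widetilde{e_s}(x_0)$, in three cases. For $i,t<n$: since $\widetilde{e_i}(\sqrt{h})=0$ and $e_t^{\partial M}$ has vanishing boundary-derivatives at $x_0$, one obtains $\nabla^M_{\widetilde{e_i}}\widetilde{e_t}(x_0) = \Gamma^l_{it}(x_0)\partial_{x_l} = \tfrac{1}{2} h'(0)\delta_{it}\widetilde{e_n}$, giving $\omega_{n,i}(\widetilde{e_i})(x_0)=\tfrac{1}{2} h'(0)$ and zero for other $(s,t)$. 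For $i<n$, $t=n$: $\nabla^M_{\widetilde{e_i}}\widetilde{e_n}(x_0) = \Gamma^k_{in}(x_0)\partial_{x_k} = -\tfrac{1}{2} h'(0)\widetilde{e_i}$, giving $\omega_{i,n}(\widetilde{e_i})(x_0)=-\tfrac{1}{2} h'(0)$. For $i=n$, $t<n$: the two pieces $\partial_{x_n}(\sqrt{h})(0)\,e_t^{\partial M}(x_0) = \tfrac{1}{2} h'(0)\partial_{x_t}$ and $\sqrt{h(0)}\,\Gamma^k_{nt}(x_0)\partial_{x_k} = -\tfrac{1}{2} h'(0)\partial_{x_t}$ cancel exactly, and $\nabla^M_{\widetilde{e_n}}\widetilde{e_n}(x_0) = 0$ since all $\Gamma^k_{nn}(x_0)$ vanish. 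Antisymmetry $\omega_{s,t}=-\omega_{t,s}$ forced by orthonormality of the frame serves as a final consistency check.

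The only real obstacle is bookkeeping --- tracking which of the conformal factor $\sqrt{h}$, the Gram--Schmidt correction, and the Christoffel symbols contribute in each case --- but no substantive geometric difficulty arises, since normal coordinates on $\partial M$ combined with the warped-product form of $g^M$ collapse almost every term at $x_0$.
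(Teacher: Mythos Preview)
Your proposal is correct. The paper itself does not prove this lemma but simply cites it from \cite{Wa3}; your argument supplies precisely the direct computation that the citation is standing in for --- computing the Christoffel symbols of the warped-product metric at $x_0$ in boundary normal coordinates and then reading off $\omega_{s,t}(\widetilde{e_i})(x_0)=\langle\nabla^M_{\widetilde{e_i}}\widetilde{e_t},\widetilde{e_s}\rangle$ using the adapted orthonormal frame $\widetilde{e_n}=\partial_{x_n}$, $\widetilde{e_i}=\sqrt{h}\,e_i^{\partial M}$.

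One small remark: your implicit normalization $h(0)=1$ is justified by (3.13), since $g^M_{ij}(x_0)=\delta_{ij}$ together with $g^{\partial M}_{ij}(x_0)=\delta_{ij}$ forces $h(0)=1$; it would be worth stating this explicitly. Otherwise the bookkeeping is clean and the three cases cover everything, with antisymmetry handling the remaining index combinations.
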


\begin{lem}
By the relation of the Clifford action and ${\rm tr}{AB}={\rm tr }{BA}$, then we have the equalities:
\begin{eqnarray}
&&{\rm tr}[\tilde{c}(\xi')\tilde{p}_{0}\tilde{c}(\xi')\epsilon(dx_n)](x_0)|_{|\xi'|=1}=6ab^{2}h'(0);
{\rm tr}[\tilde{c}(\xi')\tilde{p}_{0}\tilde{c}(\xi')\iota(dx_n)](x_0)|_{|\xi'|=1}=-6a^{2}b h'(0);\nonumber\\
&&{\rm tr}[\tilde{c}(dx_n)\tilde{p}_{0}\tilde{c}(dx_n)\epsilon(dx_n)](x_0)|_{|\xi'|=1}=-6b^{2}ah'(0);
{\rm tr}[\tilde{c}(dx_n)\tilde{p}_{0}\tilde{c}(dx_n)\iota(dx_n)](x_0)|_{|\xi'|=1}=6a^{2}b h'(0);\nonumber\\
&&{\rm tr}[\tilde{c}(dx_n)\tilde{p}_{0}\tilde{c}(\xi')\epsilon(\xi')](x_0)|_{|\xi'|=1}=-2b^{2}ah'(0);
{\rm tr}[\tilde{c}(dx_n)\tilde{p}_{0}\tilde{c}(\xi')\iota(\xi')](x_0)|_{|\xi'|=1}=10a^{2}b h'(0);\nonumber\\
&&{\rm tr}[\tilde{c}(\xi')\tilde{p}_{0}\tilde{c}(dx_n)\epsilon(\xi')](x_0)|_{|\xi'|=1}=-10b^{2}ah'(0);
{\rm tr}[\tilde{c}(\xi')\tilde{p}_{0}\tilde{c}(dx_n)\iota(\xi')](x_0)|_{|\xi'|=1}=2a^{2}b h'(0),
\end{eqnarray}
others vanishes.
\end{lem}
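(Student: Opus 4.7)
The plan is to compute each of the eight traces by direct expansion into the creation/annihilation operator basis. First I substitute formula (3.11) for $\tilde p_0$, together with the identities
\[
\tilde c(\xi')=a\epsilon(\xi')-b\iota(\xi'),\quad
\hat c(e_i)=\epsilon(e_i)+\iota(e_i),\quad
c(e_i)=\epsilon(e_i)-\iota(e_i),
\]
into each expression. Each trace then becomes a sum over $i=1,\dots,n-1$ of traces of length-six monomials in $\epsilon$ and $\iota$ whose arguments are drawn from $\{\xi',dx_n,e_i\}$. Working at $x_0$ in the orthonormal frame of Lemma~\ref{le:32}, with $\xi'$ tangent to $\partial M$, I can exploit the orthogonalities $\langle\xi',dx_n\rangle=0$ and $\langle e_i,dx_n\rangle=0$ for $i<n$, while $\langle e_i,\xi'\rangle=\xi'_i$.

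Next I reduce the resulting monomials using the standard rules on $\Lambda^{*}(T^{*}M)$,
\[
\{\epsilon(\alpha),\iota(\beta)\}=\langle\alpha,\beta\rangle,\quad
\{\epsilon(\alpha),\epsilon(\beta)\}=\{\iota(\alpha),\iota(\beta)\}=0,\quad
\mathrm{tr}[\epsilon(\alpha)\iota(\beta)]=2^{n-1}\langle\alpha,\beta\rangle,
\]
together with the vanishing of every trace whose $\epsilon$- and $\iota$-counts differ. A Wick-type pairing then reduces every surviving monomial to a product of three inner products, multiplied by a monomial in $a$ and $b$ whose $a$-exponent is the number of $\epsilon$-factors extracted from the three $\tilde c$'s and whose $b$-exponent is the number of $\iota$-factors. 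Summing over $i$ with $\sum_{i=1}^{n-1}(\xi'_i)^2=|\xi'|^2=1$ on $|\xi'|=1$, the two halves $c(e_i)c(e_n)$ and $\hat c(e_i)\hat c(e_n)$ of $\tilde p_0$ combine with compatible signs to yield the stated integer coefficients times $h'(0)$.

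The main obstacle will be the bookkeeping: each of the eight traces expands into a large collection of degree-six monomials, with most cancelling via the anticommutation relations and the orthogonality $\xi'\perp dx_n$. One must verify in particular that no ``cross'' contribution mixing the $\hat c\hat c$-part with the $cc$-part of $\tilde p_0$ survives as a term of degree other than $a^2b$ or $ab^2$, so that the entries on the right side are pure integer multiples of $h'(0)$. The asymmetry between the third factor being $\epsilon(\cdot)$ or $\iota(\cdot)$ is what flips $ab^2$ with $a^2b$ in each pair of identities in (3.13), and checking consistency of those sign and parity flips is the only genuinely delicate bit; everything else is mechanical.
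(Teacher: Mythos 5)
Your proposal is correct and follows essentially the same route as the paper: expand $\tilde c$, $\hat c$, $c$ and $\tilde p_0$ into exterior/interior multiplications, then reduce using the anticommutation relations $\{\epsilon(\alpha),\iota(\beta)\}=\langle\alpha,\beta\rangle$, the orthogonality $\xi'\perp dx_n$, cyclicity of the trace, and ${\rm tr}[\epsilon(e_i)\iota(e_i)]=2^{n-1}$. The only cosmetic difference is order of operations — the paper first uses cyclicity to collapse the two outer $\tilde c$-factors to a scalar $ab$ times ${\rm tr}[\tilde p_0\cdot(\text{last factor})]$ before expanding $\tilde p_0$, which shortens the bookkeeping you describe, but the underlying computation is identical.
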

\begin{proof}
By the relation of the Clifford action and ${\rm tr}{AB}={\rm tr }{BA}$, then
\begin{eqnarray}
&&{\rm tr}[\tilde{c}(\xi')\tilde{p}_{0}\tilde{c}(\xi')\epsilon(dx_n)](x_0)|_{|\xi'|=1}\nonumber\\
&=&{\rm tr}[(a\epsilon(\xi')-b\iota(\xi'))\tilde{p}_{0}(a\epsilon(\xi')-b\iota(\xi'))\epsilon(dx_n)](x_0)|_{|\xi'|=1}\nonumber\\
&=&ab{\rm tr}[\epsilon(\xi')\iota(\xi')\tilde{p}_{0}\epsilon(dx_n)](x_0)|_{|\xi'|=1}
 +ab{\rm tr}[\iota(\xi')\epsilon(\xi')\tilde{p}_{0}\epsilon(dx_n)](x_0)|_{|\xi'|=1}\nonumber\\
 &=&ab{\rm tr}[\tilde{p}_{0}\epsilon(dx_n)](x_0)|_{|\xi'|=1},
\end{eqnarray}
where
\begin{eqnarray}
&&{\rm tr}[\tilde{p}_{0}\epsilon(dx_n)]\nonumber\\
&=&{\rm tr}[(-\frac{1}{4}h'(0)\sum^{n-1}_{i=1}\tilde{c}(e_i)\hat{c}(e_i)\hat{c}(e_n)+
 \frac{1}{4}h'(0)\sum^{n-1}_{i=1}\tilde{c}(e_i)c(e_i)c(e_n))\epsilon(dx_n)]\nonumber\\
&=&-\frac{1}{4}h'(0)\sum^{n-1}_{i=1}{\rm tr}[\epsilon(dx_n)\tilde{c}(e_i)\hat{c}(e_i)\hat{c}(e_n)]
+\frac{1}{4}h'(0)\sum^{n-1}_{i=1}{\rm tr}[\epsilon(dx_n)\tilde{c}(e_i)c(e_i)c(e_n)].
\end{eqnarray}
By the relation of the Clifford action and $\epsilon(e_{i})\iota(e_{j})+\iota(e_{j})\epsilon(e_{i})=\delta_{ij}$,
we obtain
\begin{eqnarray}
\sum^{n-1}_{i=1}{\rm tr}[\epsilon(dx_n)\tilde{c}(e_i)\hat{c}(e_i)\hat{c}(e_n)]
&=&a\sum^{n-1}_{i=1}{\rm tr}[\epsilon(e_{i})\iota(e_{i})\iota(e_{n})\epsilon(e_{n})]
-b\sum^{n-1}_{i=1}{\rm tr}[\iota(e_{i})\epsilon(e_{i})\iota(e_{n})\epsilon(e_{n})]\nonumber\\
&=&\frac{a}{2}\sum^{n-1}_{i=1}{\rm tr}[\epsilon(e_{i})\iota(e_{i})]
-b\sum^{n-1}_{i=1}{\rm tr}[\iota(e_{i})\epsilon(e_{i})]
=12(a-b),
\end{eqnarray}
and
\begin{equation}
\sum^{n-1}_{i=1}{\rm tr}[\epsilon(dx_n)\tilde{c}(e_i)c(e_i)c(e_n)]
=12(a+b).
\end{equation}
Combining (3.17)-(3.20), we have
\begin{equation}
{\rm tr}[\tilde{c}(\xi')\tilde{p}_{0}\tilde{c}(\xi')\epsilon(dx_n)](x_0)|_{|\xi'|=1}=6ab^{2}h'(0).
\end{equation}
Others are similarly.
\end{proof}

Let us now turn to compute $\Phi$ (see formula (2.5) for definition of $\Phi$). Since the sum is taken over $-r-\ell+k+j+|\alpha|=3,
 \ r, \ell\leq-1$, then we have the following five cases:

\textbf{Case a (I)}: \ $r=-1, \ \ell=-1, \ k=j=0, \ |\alpha|=1$

From (2.5) we have
\begin{equation}
\text{ Case a (\text{I}) }=-\int_{|\xi'|=1}\int_{-\infty}^{+\infty}\sum_{|\alpha|=1}\text{trace}
   \Big[\partial_{\xi'}^{\alpha}\pi_{\xi_{n}}^{+}\sigma_{-1}(\tilde{D}^{-1})
\times \partial_{x'}^{\alpha}\partial_{\xi_{n}}\sigma_{-1}((\tilde{D^{*}})^{-1})\Big](x_{0})\texttt{d}\xi_{n}
\sigma(\xi')\texttt{d}x'.
\end{equation}
Then an application of Lemma 3.2 shows that,
\begin{equation}
\partial_{x_i}\sigma_{-1}((\tilde{D^{*}})^{-1})(x_0)=\partial_{x_i}\left(\frac{\sqrt{-1}\bar{c}(\xi)}{ab|\xi|^2}\right)(x_0)=
\frac{\sqrt{-1}\partial_{x_i}[\bar{c}(\xi)](x_0)}{ab|\xi|^2}
-\frac{\sqrt{-1}\bar{c}(\xi)\partial_{x_i}(|\xi|^2)(x_0)}{ab|\xi|^4}=0,
 \end{equation}
so Case a (I) vanishes.

\textbf{Case a (II)}: \ $r=-1, \ \ell=-1, \ k=|\alpha|=0, \ j=1$

From (2.5) we have
\begin{equation}
\text{ Case a (\text{II}) }=-\frac{1}{2}\int_{|\xi'|=1}\int_{-\infty}^{+\infty}
             \text{trace}[\partial_{x_{n}}\pi_{\xi_{n}}^{+}\sigma_{-1}(\tilde{D}^{-1})
\times \partial_{\xi_{n}}^{2}\sigma_{-1}((\tilde{D^{*}})^{-1})](x_{0})\texttt{d}\xi_{n}\sigma(\xi')\texttt{d}x'.
\end{equation}
From Lemma 3.1 and Lemma 3.2, we have
\begin{equation}
\partial_{x_{n}}\sigma_{-1}(\tilde{D}^{-1})(x_{0})|_{|\xi'|=1}=\frac{\sqrt{-1}\partial_{x_n}[\tilde{c}(\xi)](x_0)}{ab|\xi|^2}
-\frac{\sqrt{-1}\tilde{c}(\xi)h'(0)}{ab|\xi|^4}.
\end{equation}
By the Cauchy integral formula, we obtain
\begin{equation}
\pi_{\xi_{n}}^{+}[\frac{1}{(1+\xi_{n}^{2})^{2}}](x_{0})|_{|\xi'|=1}
  =\frac{1}{2\pi i}\lim_{u\rightarrow 0^{-}}\int_{\Gamma^{+}}\frac{\frac{1}{(\eta_{n}+i)^{2}(\xi_{n}+iu-\eta_{n})}}
{(\eta_{n}-i)^{2}}\texttt{d}\eta_{n}=-\frac{i\xi_{n}+2}{4(\xi_{n}-i)^{2}},
\end{equation}
and
\begin{equation}
\pi^+_{\xi_n}\left[\frac{\sqrt{-1}\partial_{x_n}c(\xi')}{|\xi|^2}\right](x_0)|_{|\xi'|=1}
=\frac{\partial_{x_n}[c(\xi')](x_0)}{2(\xi_n-i)}.
\end{equation}
Then
\begin{eqnarray}
&&\partial_{x_{n}}\pi_{\xi_{n}}^{+}\sigma_{-1}(\tilde{D}^{-1})(x_{0})|_{|\xi'|=1}\nonumber\\
&=&\frac{\partial_{x_n}[\tilde{c}(\xi')](x_0)}{2ab(\xi_{n}-i)}
+\frac{ih'(0)}{ab}\Big[ \frac{i\tilde{c}(\xi')}{4(\xi_{n}-i)} +\frac{\tilde{c}(\xi')+i\tilde{c}(dx_{n})}{4(\xi_{n}-i)^{2}}\Big].
\nonumber\\
&=&-\frac{\partial_{x_n}[\iota(\xi')](x_0)}{2a(\xi_{n}-i)}
+\frac{ih'(0)}{ab}\Big[ \frac{a(i\xi_{n}+2)\epsilon(\xi')}{4(\xi_{n}-i)^{2}} -\frac{b(i\xi_{n}+2)\iota(\xi')}{4(\xi_{n}-i)^{2}}
+\frac{ai\epsilon(dx_{n})}{4(\xi_{n}-i)^{2}} -\frac{bi\iota(dx_{n})}{4(\xi_{n}-i)^{2}}\Big].
\end{eqnarray}
From  Lemma 3.2, we have
\begin{eqnarray}
 &&\partial_{\xi_{n}}^{2}\sigma_{-1}((\tilde{D^{*}})^{-1})(x_{0})|_{|\xi'|=1}\nonumber\\
&=&\frac{\sqrt{-1}}{ab}\left(-\frac{6\xi_n\bar{c} (dx_n)+2\bar{c} (\xi')}{|\xi|^4}+\frac{8\xi_n^2\bar{c} (\xi)}{|\xi|^6}\right)\nonumber\\
&=&\frac{\sqrt{-1}}{ab}\Big[ \frac{b(6\xi_{n}^{2}-2)\epsilon(\xi')}{(1+\xi_{n}^{2})^{3}} -\frac{a(6\xi_{n}^{2}-2)\iota(\xi')}{(1+\xi_{n}^{2})^{3}}
+\frac{b(2\xi_{n}^{3}-6\xi_{n})\epsilon(dx_{n})}{(1+\xi_{n}^{2})^{3}} -\frac{a(2\xi_{n}^{3}-6\xi_{n})\iota(dx_{n})}{(1+\xi_{n}^{2})^{3}}\Big].
\end{eqnarray}
By the relation of the Clifford action and ${\rm tr}{AB}={\rm tr }{BA}$, then we have the equalities:
\begin{eqnarray}
&&{\rm tr}[\epsilon(\xi')\iota(\xi')]=8;~~{\rm tr}[\epsilon(dx_n)\iota(dx_n)]=8;
{\rm tr}[\partial_{x_n}\iota(\xi')\epsilon(\xi')](x_0)|_{|\xi'|=1}=8h'(0);\nonumber\\
&&{\rm tr}[\partial_{x_n}\iota(\xi')\iota(dx_n)\epsilon(\xi')\epsilon(dx_n)](x_0)|_{|\xi'|=1}=-4h'(0);
{\rm tr}[\partial_{x_n}\iota(\xi')\iota(\xi')\epsilon(\xi')\epsilon(dx_n)](x_0)|_{|\xi'|=1}=0.
\end{eqnarray}
Combining (3.28), (3.29) and (3.30), we have
\begin{eqnarray}
 &&\text{trace}[\partial_{x_{n}}\pi_{\xi_{n}}^{+}\sigma_{-1}(\tilde{D}^{-1})
\times \partial_{\xi_{n}}^{2}\sigma_{-1}((\tilde{D^{*}})^{-1})](x_{0})|_{|\xi'|=1}\nonumber\\
&=&\frac{h'(0)}{a^{2}}\frac{8(-i\xi_{n}-i\xi_{n}^{3})}{(\xi_{n}-i)^{2}(1+\xi_{n}^{2})^{3}}
+\frac{h'(0)}{b^{2}}\frac{8(-1-2i\xi_{n}+3\xi_{n}^{2}+2i\xi_{n}^{3})}{(\xi_{n}-i)^{2}(1+\xi_{n}^{2})^{3}}.
\end{eqnarray}
Hence
\begin{eqnarray}
&&\text{ Case a (\text{II}) }\nonumber\\
&=&-\frac{1}{2}\int_{|\xi'|=1}\int_{-\infty}^{+\infty}
 \Big[\frac{h'(0)}{a^{2}}\frac{8(-i\xi_{n}-i\xi_{n}^{3})}{(\xi_{n}-i)^{2}(1+\xi_{n}^{2})^{3}}
+\frac{h'(0)}{b^{2}}\frac{8(-1-2i\xi_{n}+3\xi_{n}^{2}+2i\xi_{n}^{3})}{(\xi_{n}-i)^{2}(1+\xi_{n}^{2})^{3}}
 \Big]\texttt{d}\xi_{n}\sigma(\xi')\texttt{d}x'\nonumber\\
&=&-\frac{1}{2} \frac{h'(0)}{a^{2}}\Omega_{3}\int_{\Gamma^{+}}\frac{8(-i\xi_{n}-i\xi_{n}^{3})}{(\xi_{n}-i)^{2}(1+\xi_{n}^{2})^{3}}
\texttt{d}\xi_{n}\texttt{d}x'
-\frac{1}{2} \frac{h'(0)}{b^{2}}\Omega_{3}\int_{\Gamma^{+}}\frac{8(-1-2i\xi_{n}
+3\xi_{n}^{2}+2i\xi_{n}^{3})}{(\xi_{n}-i)^{2}(1+\xi_{n}^{2})^{3}}\texttt{d}\xi_{n}\texttt{d}x'\nonumber\\
&=&-\frac{1}{2} \frac{h'(0)}{a^{2}}\Omega_{3}\frac{2\pi i}{4!}
\Big[\frac{ 8(-i\xi_{n}-i\xi_{n}^{3}) }{(\xi_{n}+i)^{3}}\Big]^{(4)}|_{\xi_{n}=i}\texttt{d}x'
-\frac{1}{2} \frac{h'(0)}{b^{2}}\Omega_{3}\frac{2\pi i}{4!}
\Big[\frac{8(-1-2i\xi_{n}+3\xi_{n}^{2}+2i\xi_{n}^{3}) }{(\xi_{n}+i)^{3}}\Big]^{(4)}|_{\xi_{n}=i}\texttt{d}x' \nonumber\\
&=&-\frac{1}{2} \frac{h'(0)}{a^{2}}\Omega_{3}\frac{2\pi i}{4!}(-12i)\texttt{d}x'
-\frac{1}{2} \frac{h'(0)}{b^{2}}\Omega_{3}\frac{2\pi i}{4!}(-24i)\texttt{d}x' \nonumber\\
&=&(-\frac{1}{2a^{2}}  -\frac{1}{b^{2}})\pi h'(0)  \Omega_{3}\texttt{d}x',
\end{eqnarray}
where $\Omega_{3}$ is the canonical volume of $S^{3}.$

\textbf{Case a (III)}: \ $r=-1, \ \ell=-1, \ j=|\alpha|=0, \ k=1$

 From (2.5) we have
 \begin{equation}
\text{ Case a (\text{III}) }=-\frac{1}{2}\int_{|\xi'|=1}\int_{-\infty}^{+\infty}\text{trace}\Big[\partial_{\xi_{n}}\pi_{\xi_{n}}^{+}
\sigma_{-1}(\tilde{D}^{-1})\times\partial_{\xi_{n}}\partial_{x_{n}}\sigma_{-1}((\tilde{D^{*}})^{-1})\Big](x_{0})
    \texttt{d}\xi_{n}\sigma(\xi')\texttt{d}x'.
 \end{equation}
From Lemma 3.1 and Lemma 3.2, we get
\begin{eqnarray}
 \partial_{\xi_n}\pi^+_{\xi_n}\sigma_{-1}(\tilde{D}^{-1})(x_0)|_{|\xi'|=1}  &=&-\frac{\tilde{c}(\xi')+i\tilde{c}(dx_n)}{2ab(\xi_n-i)^2} \nonumber\\
      &=&-\frac{a\epsilon(\xi')-b\iota(\xi') +i(\epsilon(dx_n)+\iota(dx_n))}{2ab(\xi_n-i)^2},
\end{eqnarray}
and
\begin{eqnarray}
&&\partial_{\xi_n}\partial_{x_n}\sigma_{-1}((\tilde{D^{*}})^{-1}(x_0)|_{|\xi'|=1}\nonumber\\
&=&\frac{-\sqrt{-1}h'(0)}{ab}
\left[\frac{\bar{c}(dx_n)}{|\xi|^4}-4\xi_n\frac{\bar{c}(\xi')+\xi_n\bar{c}(dx_n)}{|\xi|^6}\right]-
\frac{2\xi_n\sqrt{-1}\partial_{x_n}\bar{c}(\xi')(x_0)}{ab|\xi|^4}\nonumber\\
&=&\frac{2i\xi_{n}\partial_{x_n}[\iota(\xi')](x_0)}{b|\xi|^4}
+\frac{ih'(0)}{ab}\Big[ \frac{4 b \xi_{n}\epsilon(\xi')}{|\xi|^6} -\frac{4a\xi_{n}\iota(\xi')}{|\xi|^6}
-\frac{b(|\xi|^2-4\xi_{n}^{2})\epsilon(dx_{n})}{|\xi|^6} +\frac{a(|\xi|^2-4\xi_{n}^{2})\iota(dx_{n})}{|\xi|^6}\Big].\nonumber\\
\end{eqnarray}
Combining (3.34) and (3.35), we obtain
\begin{eqnarray}
 &&\text{trace}\Big[\partial_{\xi_{n}}\pi_{\xi_{n}}^{+}
\sigma_{-1}(\tilde{D}^{-1})\times\partial_{\xi_{n}}\partial_{x_{n}}\sigma_{-1}((\tilde{D^{*}})^{-1})\Big](x_{0})|_{|\xi'|=1}\nonumber\\
&=&\frac{h'(0)}{a^{2}}\frac{4(1+4i\xi_{n}-3\xi_{n}^{3})}{(\xi_{n}-i)^{2}(1+\xi_{n}^{2})^{3}}
-\frac{h'(0)}{b^{2}}\frac{4(-1-2i\xi_{n}+3\xi_{n}^{2}+2i\xi_{n}^{3})}{(\xi_{n}-i)^{2}(1+\xi_{n}^{2})^{3}}.
\end{eqnarray}
Then
\begin{eqnarray}
&&\text{ Case a (\text{III}) }\nonumber\\
&=&-\frac{1}{2}\int_{|\xi'|=1}\int_{-\infty}^{+\infty}
 \Big[\frac{h'(0)}{a^{2}}\frac{4(1+4i\xi_{n}-3\xi_{n}^{3})}{(\xi_{n}-i)^{2}(1+\xi_{n}^{2})^{3}}
+\frac{h'(0)}{b^{2}}\frac{4(-1-2i\xi_{n}+3\xi_{n}^{2}+2i\xi_{n}^{3})}{(\xi_{n}-i)^{2}(1+\xi_{n}^{2})^{3}}
 \Big]\texttt{d}\xi_{n}\sigma(\xi')\texttt{d}x'\nonumber\\
&=&-\frac{1}{2} \frac{h'(0)}{a^{2}}\Omega_{3}\int_{\Gamma^{+}}\frac{4(1+4i\xi_{n}-3\xi_{n}^{3})}{(\xi_{n}-i)^{2}(1+\xi_{n}^{2})^{3}}
\texttt{d}\xi_{n}\texttt{d}x'
+\frac{1}{2} \frac{h'(0)}{b^{2}}\Omega_{3}\int_{\Gamma^{+}}\frac{4(-1-2i\xi_{n}+3\xi_{n}^{2}+2i\xi_{n}^{3})}
{(\xi_{n}-i)^{2}(1+\xi_{n}^{2})^{3}}\texttt{d}\xi_{n}\texttt{d}x'\nonumber\\
&=&-\frac{1}{2} \frac{h'(0)}{a^{2}}\Omega_{3}\frac{2\pi i}{4!}(-12i)\texttt{d}x'
+\frac{1}{2} \frac{h'(0)}{b^{2}}\Omega_{3}\frac{2\pi i}{4!}(24i)\texttt{d}x' \nonumber\\
&=&( \frac{1}{a^{2}} +\frac{1}{2b^{2}})\pi h'(0)  \Omega_{3}\texttt{d}x'.
\end{eqnarray}

\textbf{Case b}: \ $r=-2, \ \ell=-1, \ k=j=|\alpha|=0$

From (2.5) we have
\begin{equation}
\text{ Case b}=-i\int_{|\xi'|=1}\int_{-\infty}^{+\infty}\text{trace}[\pi_{\xi_{n}}^{+}\sigma_{-2}(\tilde{D}^{-1})
       \times\partial_{\xi_{n}}\sigma_{-1}((\tilde{D^{*}})^{-1})](x_{0})\texttt{d}\xi_{n}\sigma(\xi')\texttt{d}x' .
\end{equation}
 By Lemma 3.1 and Lemma 3.2, we obtain
 \begin{eqnarray}
&&\partial_{\xi_{n}}\sigma_{-1}((\tilde{D^{*}})^{-1})](x_{0})\nonumber\\
&=&\frac{\sqrt{-1}}{ab}\left(-\frac{2\xi_n^{2}\bar{c}(dx_n)+2\xi_n\bar{c}(\xi')}
{|\xi|^4}+\frac{\bar{c}(dx_n)}{|\xi|^2}\right)\nonumber\\
&=&\frac{\sqrt{-1}}{ab}\Big[ \frac{-2b\xi_{n}\epsilon(\xi')}{(1+\xi_{n}^{2})^{2}} +\frac{2a\xi_{n}\iota(\xi')}{(1+\xi_{n}^{2})^{2}}
+\frac{b(1-\xi_{n}^{2})\epsilon(dx_{n})}{(1+\xi_{n}^{2})^{2}} +\frac{a(\xi_{n}^{2}-1)\iota(dx_{n})}{(1+\xi_{n}^{2})^{2}}\Big]
\end{eqnarray}
 and
\begin{equation}
\sigma_{-2}(\tilde{D}^{-1})(x_0)=\frac{\tilde{c}(\xi)\tilde{p}_0(x_0)\tilde{c}(\xi)}{a^{2}b^{2}|\xi|^4}
+\frac{\tilde{c}(\xi)}{a^{2}b^{2}|\xi|^6}\tilde{c}(dx_n)
\Big[\partial_{x_n}[\tilde{c}(\xi')](x_0)|\xi|^2-\tilde{c}(\xi)h'(0)|\xi|^2_{\partial M}\Big].
\end{equation}
Then
 \begin{eqnarray}
&&\pi^+_{\xi_n}\sigma_{-2}(\tilde{D}^{-1})(x_0)|_{|\xi'|=1}\nonumber\\
&=&\pi^+_{\xi_n}\left[\frac{\tilde{c}(\xi)\tilde{p}_0(x_0)\tilde{c}(\xi)
+\tilde{c}(\xi)\tilde{c}(dx_n)\partial_{x_n}[\tilde{c}(\xi')](x_0)}{a^{2}b^{2}(1+\xi_n^2)^2}\right]
  -h'(0)\pi^+_{\xi_n}\left[\frac{\tilde{c}(\xi)\tilde{c}(dx_n)\tilde{c}(\xi)}{a^{2}b^{2}(1+\xi_n)^3}\right]\nonumber\\
&:=&B_1-B_2,
\end{eqnarray}
where
 \begin{eqnarray}
B_1&=&\frac{-1}{4a^{2}b^{2}(\xi_n-i)^2}\Big[(2+i\xi_n)\tilde{c}(\xi')\tilde{p}_0\tilde{c}(\xi')+i\xi_n\tilde{c}(dx_n)\tilde{p_0}\tilde{c}(dx_n)\nonumber\\
&&+(2+i\xi_n)\tilde{c}(\xi')\tilde{c}(dx_n)\partial_{x_n}\tilde{c}(\xi')+i\tilde{c}(dx_n)\tilde{p_0}\tilde{c}(\xi')
+i\tilde{c}(\xi')\tilde{p}_0\tilde{c}(dx_n)-i\partial_{x_n}\tilde{c}(\xi')\Big]\nonumber\\
&=&\frac{-1}{4a^{2}b^{2}(\xi_n-i)^2}\Big[(2+i\xi_n)\tilde{c}(\xi')\tilde{p}_0\tilde{c}(\xi')+i\xi_n\tilde{c}(dx_n)\tilde{p_0}\tilde{c}(dx_n)
+i\tilde{c}(dx_n)\tilde{p_0}\tilde{c}(\xi')+i\tilde{c}(\xi')\tilde{p}_0\tilde{c}(dx_n)\Big]\nonumber\\
&&+\frac{(2+i\xi_{n})\epsilon(\xi')\epsilon(dx_{n})\partial_{x_n}[\iota(\xi')](x_0)}{4b(\xi_{n}-i)^{2}}
 +\frac{b(2+i\xi_{n})\iota(\xi')\iota(dx_{n})\partial_{x_n}[\iota(\xi')](x_0)}{4a^{2}(\xi_{n}-i)^{2}}
 -\frac{i\partial_{x_n}[\iota(\xi')](x_0)}{4a(\xi_{n}-i)^{2}}.
\end{eqnarray}
From (3.39) and (3.42), we have
\begin{eqnarray}
 &&\text{trace}\Big[B_{1}\times\partial_{\xi_{n}}\sigma_{-1}((\tilde{D^{*}})^{-1})](x_{0})|_{|\xi'|=1}\nonumber\\
&=&\frac{h'(0)}{a^{2}}\times\frac{-i(3+5i\xi_{n}-3\xi_{n}^{2})}{(\xi_{n}-i)^{2}(1+\xi_{n}^{2})^{2}}
+\frac{h'(0)}{b^{2}}\times\frac{i(-1-23i\xi_{n}+\xi_{n}^{2}+2i\xi_{n}^{3})}{(\xi_{n}-i)^{2}(1+\xi_{n}^{2})^{2}}.
\end{eqnarray}
On the other hand,
\begin{eqnarray}
B_2&=&h'(0)\pi_{\xi_n}^+\left[\frac{-\xi_n^2c(dx_n)^2-2\xi_nc(\xi')+c(dx_n)}{a^{2}b^{2}(1+\xi_n^2)^3}\right]\nonumber\\
&=&\frac{h'(0)}{2ab}\left[\frac{\tilde{c}(dx_n)}{4i(\xi_n-i)}+\frac{\tilde{c}(dx_n)-i\tilde{c}(\xi')}{8(\xi_n-i)^2}
+\frac{3\xi_n-7i}{8(\xi_n-i)^3}[i\tilde{c}(\xi')-\tilde{c}(dx_n)]\right]\nonumber\\
&=&\frac{(3+i\xi_{n})h'(0)\epsilon(\xi')}{8b(\xi_{n}-i)^{3}} -\frac{(3+i\xi_{n})h'(0)\iota(\xi')}{8a(\xi_{n}-i)^{3}}
+\frac{(-i\xi_{n}^{2}-3\xi_{n}+4i)h'(0)\epsilon(dx_{n})}{8b(\xi_{n}-i)^{3}} \nonumber\\
&&-\frac{(-i\xi_{n}^{2}-3\xi_{n}+4i)h'(0)\iota(dx_{n})}{8a(\xi_{n}-i)^{3}}.
\end{eqnarray}
From (3.39) and (3.44), we obtain
\begin{eqnarray}
 &&\text{trace}\Big[B_{2}\times\partial_{\xi_{n}}\sigma_{-1}((\tilde{D^{*}})^{-1})\Big](x_{0})|_{|\xi'|=1}\nonumber\\
&=&\frac{h'(0)}{a^{2}}\frac{-i(4i-9\xi_{n}-7i\xi_{n}^{2}+3\xi_{n}^{2}+i\xi_{n}^{4})}{(\xi_{n}-i)^{3}(1+\xi_{n}^{2})^{2}}
+\frac{h'(0)}{b^{2}}\frac{-i(4i-9\xi_{n}-7i\xi_{n}^{2}+3\xi_{n}^{2}+i\xi_{n}^{4})}{(\xi_{n}-i)^{3}(1+\xi_{n}^{2})^{2}}.
\end{eqnarray}
Combining (3.43) and (3.45), we have
\begin{eqnarray}
{\rm case~ b)}&=&-i\int_{|\xi'|=1}\int_{-\infty}^{+\infty}\text{trace}\Big[(B_{1}-B_{2})
       \times\partial_{\xi_{n}}\sigma_{-1}((\tilde{D^{*}})^{-1})](x_{0})\texttt{d}\xi_{n}\sigma(\xi')\texttt{d}x' \nonumber\\
       &=&(- \frac{1}{8a^{2}}+\frac{11}{8b^{2}})\pi h'(0)  \Omega_{3}\texttt{d}x'.
\end{eqnarray}

\textbf{Case c}: \ $r=-1, \ \ell=-2, \ k=j=|\alpha|=0$

From(2.5) we have
\begin{equation}
\text{Case c}=-i\int_{|\xi'|=1}\int_{-\infty}^{+\infty}\text{trace}[\pi_{\xi_{n}}^{+}\sigma_{-1}(\tilde{D}^{-1})
 \times \partial_{\xi_{n}}\sigma_{-2}((\tilde{D^{*}})^{-1})](x_{0})\texttt{d}\xi_{n}\sigma(\xi')\texttt{d}x'  .
 \end{equation}
 By Lemma 3.1, Lemma 3.2  and Lemma 3.4, we obtain
\begin{eqnarray}
 \pi^+_{\xi_n}\sigma_{-1}(\tilde{D}^{-1})(x_0)|_{|\xi'|=1}&=&\frac{\tilde{c}(\xi')+i\tilde{c}(dx_n)}{2ab(\xi_n-i)} \nonumber\\
     &=& \frac{ \epsilon(\xi')}{2b(\xi_{n}-i) } -\frac{ \iota(\xi')}{2a(\xi_{n}-i) }
+\frac{i\epsilon(dx_{n})}{2b(\xi_{n}-i) } -\frac{i\iota(dx_{n})}{2a(\xi_{n}-i) }
\end{eqnarray}
and
\begin{eqnarray}
&&\partial_{\xi_{n}}\sigma_{-2}((\tilde{D^{*}})^{-1})](x_{0})\nonumber\\
&=&\frac{1}{a^{2}b^{2}(1+\xi_n^2)^3}\Big[(2\xi_n-2\xi_n^3)\bar{c}(dx_n)\bar{p_0}\bar{c}(dx_n)+(1-3\xi_n^2)\bar{c}(dx_n)\bar{p_0}\bar{c}(\xi') \nonumber\\
     &&+(1-3\xi_n^2)\bar{c}(\xi')\bar{p_0}\bar{c}(dx_n)-4\xi_n\bar{c}(\xi')\bar{p_0}\bar{c}(\xi')
+(3\xi_n^2-1)ab\partial_{x_n}\bar{c}(\xi')-4\xi_n\bar{c}(\xi')\bar{c}(dx_n)\partial_{x_n}\bar{c}(\xi') \nonumber\\
&&+2abh'(0)\bar{c}(\xi')+2abh'(0)\xi_n\bar{c}(dx_n)\Big]
+6\xi_nh'(0)\frac{\bar{c}(\xi)\bar{c}(dx_n)\bar{c}(\xi)}{a^{2}b^{2}(1+\xi^2_n)^4}\nonumber\\
&=&\frac{1}{a^{2}b^{2}(1+\xi_n^2)^3}\Big[(2\xi_n-2\xi_n^3)\bar{c}(dx_n)\bar{p_0}\bar{c}(dx_n)+(1-3\xi_n^2)\bar{c}(dx_n)\bar{p_0}\bar{c}(\xi') \nonumber\\
     &&+(1-3\xi_n^2)\bar{c}(\xi')\bar{p_0}\bar{c}(dx_n)-4\xi_n\bar{c}(\xi')\bar{p_0}\bar{c}(\xi') \Big] \nonumber\\
&&+\frac{(2-10\xi_{n}^{2})h'(0)\epsilon(\xi')}{a(1+\xi_{n}^{2})^{4}}
-\frac{(2-10\xi_{n}^{2})h'(0)\iota(\xi')}{b(1+\xi_{n}^{2})^{4}}
+\frac{(8\xi_{n}-4\xi_{n}^{3})h'(0)\epsilon(dx_{n})}{2a(1+\xi_{n}^{2})^{4}} \nonumber\\
&&-\frac{(8\xi_{n}-4\xi_{n}^{3})h'(0)\iota(dx_{n})}{b(1+\xi_{n}^{2})^{4}}
-\frac{3(\xi_{n}^{2}-1)\partial_{x_n}[\iota(\xi')](x_0)}{b(1+\xi_{n}^{2})^{3}}\nonumber\\
&&+\frac{4\xi_{n}\epsilon(\xi')\epsilon(dx_{n})\partial_{x_n}[\iota(\xi')](x_0)}{a(1+\xi_{n}^{2})^{3}}
 +\frac{4a\xi_{n}\iota(\xi')\iota(dx_{n})\partial_{x_n}[\iota(\xi')](x_0)}{b^{2}(1+\xi_{n}^{2})^{3}}.
\end{eqnarray}
Combining (3.48) and (3.49), we have
\begin{eqnarray}
 &&\text{trace}[\pi_{\xi_{n}}^{+}\sigma_{-1}(\tilde{D}^{-1})
 \times \partial_{\xi_{n}}\sigma_{-2}((\tilde{D^{*}})^{-1})](x_{0})|_{|\xi'|=1}\nonumber\\
&=&\frac{h'(0)}{a^{2}}\frac{-2(-7-21i\xi_{n}+26\xi_{n}^{2}+6i\xi_{n}^{3}+9\xi_{n}^{4}+3i\xi_{n}^{5})}{(\xi_{n}-i) (1+\xi_{n}^{2})^{4}}
+\frac{h'(0)}{b^{2}}\frac{-2(-1-25i\xi_{n}+26\xi_{n}^{2}+2i\xi_{n}^{3}+3\xi_{n}^{4}+3i\xi_{n}^{5})}{(\xi_{n}-i) (1+\xi_{n}^{2})^{4}}.\nonumber\\
\end{eqnarray}
Then similarly to computations of the case b), we have
\begin{equation}
{\rm case~ c)}= \frac{5}{2a^{2}} \pi h'(0)  \Omega_{3}\texttt{d}x'.
\end{equation}
Since $\Phi$  is the sum of the \textbf{case a, b} and \textbf{c},
\begin{equation}
\Phi=\frac{23}{8} (\frac{1}{a^{2}}+\frac{1}{b^{2}} )\pi h'(0)  \Omega_{3}\texttt{d}x'.
\end{equation}
Then we have
\begin{thm}\label{th:32}
Let M be a  four dimensional compact connected manifold with the boundary $\partial M$ and the metric $g^{M}$ as above ,
and $\tilde{D},\tilde{D^{*}} $ are the nonminimal de Rham-Hodge operators on $C^{\infty}({\Lambda^{*}(T^{*}M)})$, then
\begin{equation}
\widetilde{{\rm
Wres}}[\pi^+\tilde{D}^{-1} \circ \pi^+(\tilde{D^{*}})^{-1} ]
=4\pi\int_{M}\sum _{k=0} ^{4}c_{1}(4,k,a,b)R\texttt{d}vol(M)
-\frac{23}{12} (\frac{1}{a^{2}}+\frac{1}{b^{2}} )\pi\int_{\partial M} K  \Omega_{3}\texttt{d}x',
\end{equation}
where $R$ is the scalar curvature and
$c_{1}(4,k,a,b)=b^{-2}\{\frac{1}{6}(_{k}^{4})-(_{k-1}^{2})\}
+(b^{-2}-a^{-2})\sum_{j<k}(-1)^{j-k}\{\frac{1}{6}(_{j}^{4})-(_{j-1}^{2})\}.$
\end{thm}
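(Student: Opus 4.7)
The plan is to apply the splitting (2.4), which writes $\widetilde{\mathrm{Wres}}[\pi^+\tilde{D}^{-1}\circ\pi^+(\tilde{D}^{*})^{-1}]$ as an interior integral plus the boundary integral of $\Phi$. For the interior piece, I would observe that $\tilde{D}\tilde{D}^{*}$ is a nonminimal generalised Laplacian and that $[\sigma_{-4}((\tilde{D}\tilde{D}^{*})^{-1})]|_M$ agrees pointwise with its closed-manifold expression. Theorem 2.2 of \cite{WW} then delivers the full scalar-curvature term $4\pi\int_M\sum_k c_1(4,k,a,b)R\,\mathrm{d}vol(M)$ with no further computation, so the remaining task is to evaluate $\int_{\partial M}\Phi$.

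The boundary term is treated case by case. The constraint $-r-\ell+k+j+|\alpha|=3$ with $r,\ell\le -1$ yields exactly five index tuples: three with $(r,\ell)=(-1,-1)$, distinguished by whether $|\alpha|$, $j$, or $k$ equals $1$, and the two tuples $(r,\ell)=(-2,-1)$ and $(r,\ell)=(-1,-2)$ with $j=k=|\alpha|=0$. For each case I would substitute the symbol formulas from Lemma 3.1, pass to boundary normal coordinates about a fixed $x_0\in\partial M$, and invoke Lemma 3.2 so that tangential derivatives vanish and $\partial_{x_n}$ produces $h'(0)$. The projection $\pi^+_{\xi_n}$ is implemented through the Cauchy integral formula (2.1), turning each summand into a rational function in $\xi_n$ with double or triple poles at $\xi_n=i$; the $\xi_n$-integral is then evaluated by closing the contour in the upper half-plane. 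Traces over $\Lambda^{*}(T^{*}M)$ reduce via standard Clifford identities together with Lemma 3.4 applied to the $\tilde{p}_0$- and $\bar{p}_0$-pieces. Case a(I) is immediate: $\partial_{x'}\sigma_{-1}((\tilde{D}^{*})^{-1})(x_0)=0$ by Lemma 3.2, so it contributes nothing.

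After summing the four nonvanishing contributions one arrives at $\Phi=\tfrac{23}{8}(a^{-2}+b^{-2})\pi h'(0)\Omega_3\,\mathrm{d}x'$; converting $h'(0)$ to the mean curvature $K$ of $\partial M$ via the second fundamental form of the warped metric $g^M=h(x_n)^{-1}g^{\partial M}+\mathrm{d}x_n^2$ in dimension four yields the factor $-\tfrac{2}{3}$ and therefore the coefficient $-\tfrac{23}{12}$ in the stated identity. The main obstacle will be the bookkeeping in Cases b and c: the symbol $q_{-2}$ splits into a $\tilde{c}(\xi)\tilde{p}_0\tilde{c}(\xi)$ part and a derivative part, and after $\pi^+_{\xi_n}$ (respectively after $\partial_{\xi_n}$) each expands into a long sum of Clifford monomials. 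Keeping the $\epsilon/\iota$ signs, the distinct weights $a,b$ hidden in $\tilde{c}$ versus $\bar{c}$, and the residue arithmetic all consistent through the trace identities of Lemma 3.4 is the delicate step; the $B_1$--$B_2$ decomposition used in the excerpt for Case b is essentially forced by this need to isolate pieces that are individually tractable.
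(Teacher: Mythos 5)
Your proposal follows essentially the same route as the paper: the splitting (2.4) with Theorem 2.2 of the nonminimal-operator paper supplying the interior scalar-curvature term, the five-case analysis of $\Phi$ (with Case a(I) vanishing by Lemma 3.2) carried out via Lemmas 3.1--3.4, the Cauchy integral formula for $\pi^{+}_{\xi_n}$, residue evaluation at $\xi_n=i$, and the $B_1$--$B_2$ decomposition in the $(r,\ell)=(-2,-1)$ case. The final conversion $h'(0)=-\tfrac{2}{3}K$ turning $\tfrac{23}{8}$ into $-\tfrac{23}{12}$ is exactly the paper's last step, so the outline is correct and not materially different.
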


Let us now consider the Einstein-Hilbert action for four dimensional manifolds with boundary.
Recall the Einstein-Hilbert action for manifolds with boundary\cite{Wa3},
\begin{equation}
I_{\rm Gr}=\frac{1}{16\pi}\int_MR{\rm dvol}_M+2\int_{\partial M}K{\rm dvol}_{\partial_M}:=I_{\rm {Gr,i}}+I_{\rm {Gr,b}},
\end{equation}
 where
 \begin{equation}
K=\sum_{1\leq i,j\leq {n-1}}K_{i,j}g_{\partial M}^{i,j};~~K_{i,j}=-\Gamma^n_{i,j},
\end{equation}
and $K_{i,j}$ is the second fundamental form, or extrinsic curvature. Take the metric in Section 2,
$K_{i,j}(x_0)=-\Gamma^n_{i,j}(x_0)=-\frac{1}{2}h'(0),$ when $i=j<n$,
otherwise is zero.

Let
 \begin{equation}
\widetilde{{\rm Wres}}[\pi^+(\tilde{D} )^{-1}\circ\pi^+(\tilde{D ^{*}})^{-1}]
=\widetilde{{\rm Wres}}_{i}[\pi^+\tilde{D }^{-1}\circ\pi^+(\tilde{D ^{*}})^{-1}]+\widetilde{{\rm Wres}}_{b}[\pi^+\tilde{D }^{-1}\circ\pi^+(\tilde{D ^{*}})^{-1}],
\end{equation}
where
 \begin{equation}
\widetilde{{\rm Wres}}_{i}[\pi^+(\tilde{D} )^{-1}\circ\pi^+(\tilde{D ^{*}})^{-1}]
=\int_{M}\int_{|\xi|=1}
  \text{trace}_{\Lambda^{*}(T^{*}M)}[\sigma_{-4}((\tilde{D}\tilde{D^{*}})^{-1})]\sigma(\xi)\text{d}x
\end{equation}
and
\begin{eqnarray}
&&\widetilde{{\rm Wres}}_{b}[\pi^+(\tilde{D} )^{-1}\circ\pi^+(\tilde{D ^{*}})^{-1}]\nonumber\\
&=& \int_{\partial M}\int_{|\xi'|=1}\int^{+\infty}_{-\infty}\sum^{\infty}_{j, k=0}\sum\frac{(-i)^{|\alpha|+j+k+1}}{\alpha!(j+k+1)!}
\times {\rm trace}_{\Lambda^{*}(T^{*}M)}[\partial^j_{x_n}\partial^\alpha_{\xi'}\partial^k_{\xi_n}\sigma^+_{r}((\tilde{D}^{-1})(x',0,\xi',\xi_n)
\nonumber\\
&&\times\partial^\alpha_{x'}\partial^{j+1}_{\xi_n}\partial^k_{x_n}\sigma_{l}((\tilde{D ^{*}})^{-1})(x',0,\xi',\xi_n)]d\xi_n\sigma(\xi')dx'
\end{eqnarray}
denote the interior term and boundary term of $\widetilde{{\rm Wres}}[\pi^+(\tilde{D} )^{-1}\circ\pi^+(\tilde{D ^{*}})^{-1}]$.
Combining (3.42), (3.43) and (3.45), we obtain
\begin{thm}
 Let M be a  four dimensional compact manifold  with the boundary $\partial M$ associated to nonminimal de Rham-Hodge operators $\tilde{D}$ and
 $\tilde{D ^{*}}$. Assume $\partial M$ is flat, then
\begin{eqnarray}
&& I_{\rm {Gr,i}}=\frac{1}{64\pi c_{1}(4,k,a,b)} \widetilde{{\rm Wres}}_{i}[\pi^+(\tilde{D} )^{-1}\circ\pi^+(\tilde{D ^{*}})^{-1}]; \nonumber\\
&&I_{\rm {Gr,b}}=\frac{-24}{23(\frac{1}{a^{2}}+\frac{1}{b^{2}} ) \pi\Omega_3 }\widetilde{{\rm Wres}}_{b}[\pi^+(\tilde{D} )^{-1}\circ\pi^+(\tilde{D ^{*}})^{-1}].
\end{eqnarray}
\end{thm}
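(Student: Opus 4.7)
The plan is to prove Theorem~3.6 as an algebraic consequence of Theorem~3.5 together with the explicit form of the Einstein--Hilbert action under the flatness hypothesis, so the work is really bookkeeping of constants rather than any new symbolic calculus. I will first split $\widetilde{{\rm Wres}}[\pi^+\tilde{D}^{-1} \circ \pi^+(\tilde{D^{*}})^{-1}]$ into its interior and boundary contributions exactly as in (3.56)--(3.58); by Theorem~3.5 these are precisely
\[
\widetilde{{\rm Wres}}_{i} = 4\pi\int_{M}\sum _{k=0}^{4}c_{1}(4,k,a,b)\,R\,\texttt{d}{\rm vol}(M),\qquad \widetilde{{\rm Wres}}_{b}= -\tfrac{23}{12}\!\left(\tfrac{1}{a^{2}}+\tfrac{1}{b^{2}}\right)\!\pi\!\int_{\partial M}K\,\Omega_{3}\,\texttt{d}x'.
\]

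For the interior identity, I would simply compare the first displayed expression with $I_{\rm Gr,i}=\tfrac{1}{16\pi}\int_M R\,{\rm dvol}_M$; solving for $\int_M R\,{\rm dvol}_M$ in each and equating, the prefactor $\tfrac{1}{64\pi\, c_1(4,k,a,b)}$ appears directly. This step uses nothing beyond the definition of $I_{\rm Gr,i}$.

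The boundary identity is where the flatness hypothesis on $\partial M$ plays its role. When $\partial M$ is flat, in the collar coordinates from Section~2 we have $g^{\partial M}=\sum (\texttt{d}x^{i})^{2}$, so ${\rm dvol}_{\partial M}=\texttt{d}x'$ and the $S^{3}$--fibre $\Omega_{3}$ appearing in $\widetilde{{\rm Wres}}_{b}$ is a genuine constant that factors cleanly out of the boundary integral. I would then write $I_{\rm Gr,b}=2\int_{\partial M}K\,\texttt{d}x'$, substitute $\int_{\partial M}K\,\texttt{d}x'$ from the boundary residue, and read off the coefficient $\dfrac{-24}{23(\tfrac{1}{a^{2}}+\tfrac{1}{b^{2}})\pi\Omega_{3}}$ by clearing the factor $-\tfrac{23}{12}(\tfrac{1}{a^{2}}+\tfrac{1}{b^{2}})\pi\Omega_{3}$ from the right-hand side.

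The main obstacle, such as it is, is conceptual rather than computational: making sure that the second fundamental form $K$ and the boundary volume form $\texttt{d}{\rm vol}_{\partial M}$ appearing in the Einstein--Hilbert action really do match the objects that emerged from the symbolic calculation, since Theorem~3.5 was obtained in normal coordinates at a fixed point $x_{0}\in\partial M$. Flatness of $\partial M$ guarantees that these local identifications globalize without any residual curvature correction from the intrinsic geometry of $\partial M$, and that the constant $\Omega_{3}$ can legitimately be pulled outside the boundary integral. Once that observation is made, (3.59) follows immediately by the two algebraic inversions described above.
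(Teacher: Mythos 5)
Your proposal is correct and follows essentially the same route as the paper: split $\widetilde{{\rm Wres}}$ into the interior and boundary pieces (3.56)--(3.58), feed in Theorem 3.5, use $I_{\rm Gr,i}=\frac{1}{16\pi}\int_M R\,{\rm dvol}_M$ and $I_{\rm Gr,b}=2\int_{\partial M}K\,{\rm dvol}_{\partial M}$ together with the flatness of $\partial M$ to identify ${\rm dvol}_{\partial M}$ with $\texttt{d}x'$, and solve for the two constants. The only caveat is one you inherit from the paper's own statement rather than introduce: the interior comparison actually yields the prefactor $\bigl(64\pi^{2}\sum_{k=0}^{4}c_{1}(4,k,a,b)\bigr)^{-1}$, so the displayed $\frac{1}{64\pi c_{1}(4,k,a,b)}$ should be read with $c_{1}(4,k,a,b)$ standing for $\pi\sum_{k}c_{1}(4,k,a,b)$.
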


\section{The Kastler-Kalau-Walze type theorem of the nonminimal de Rham-Hodge operators  $\tilde{D}$ }
 \label{4}
In this section, we compute the lower dimension volume for four dimension compact connected manifolds with boundary
associated to nonminimal de Rham-Hodge operators $\tilde{D}$ and get a Kastler-Kalau-Walze type theorem in this case.
Let $M$ be an four dimensional compact oriented connected manifold with boundary $\partial M$, and the metric $g^{M}$ on $M$ as above.
Note that $[\sigma_{-4}(\tilde{D}^{-2})]|_{M}$ has the same expression with the case of  without boundary in \cite{Wa3},
so locally we can use Theorem 3.1 in \cite{Wa3} to compute the first term. Therefore
\begin{equation}
\int_{M}\int_{|\xi|=1}
  \text{trace}_{{\Lambda^{*}(T^{*}M)}}[\sigma_{-4}(\tilde{D}^{-2})]\sigma(\xi)\text{d}x
=\frac{8\Omega_4}{3ab}\int_MR{\rm
dvol}_M,
\end{equation}
where $R$ is the scalar curvature.

Let us now turn to compute $\Phi$ (see formula (2.5) for definition of $\Phi$). Since the sum is taken over $-r-\ell+k+j+|\alpha|=3,
 \ r, \ell\leq-1$, then we have the following five cases:

\textbf{Case a (I)}: \ $r=-1, \ \ell=-1, \ k=j=0, \ |\alpha|=1$

From (2.5) we have
\begin{equation}
\text{ Case a (\text{I}) }=-\int_{|\xi'|=1}\int_{-\infty}^{+\infty}\sum_{|\alpha|=1}\text{trace}
   \Big[\partial_{\xi'}^{\alpha}\pi_{\xi_{n}}^{+}\sigma_{-1}(\tilde{D}^{-1})
\times \partial_{x'}^{\alpha}\partial_{\xi_{n}}\sigma_{-1}(\tilde{D}^{-1})\Big](x_{0})\texttt{d}\xi_{n}
\sigma(\xi')\texttt{d}x'.
\end{equation}
Then an application of Lemma 3.2 shows that,
\begin{equation}
\partial_{x_i}\sigma_{-1}(\tilde{D}^{-1})(x_0)=\partial_{x_i}\left(\frac{\sqrt{-1}\tilde{c}(\xi)}{ab|\xi|^2}\right)(x_0)=
\frac{\sqrt{-1}\partial_{x_i}[\tilde{c}(\xi)](x_0)}{ab|\xi|^2}
-\frac{\sqrt{-1}\tilde{c}(\xi)\partial_{x_i}(|\xi|^2)(x_0)}{ab|\xi|^4}=0,
 \end{equation}
so Case a (I) vanishes.

\textbf{Case a (II)}: \ $r=-1, \ \ell=-1, \ k=|\alpha|=0, \ j=1$

From (2.5) we have
\begin{equation}
\text{ Case a (\text{II}) }=-\frac{1}{2}\int_{|\xi'|=1}\int_{-\infty}^{+\infty}
             \text{trace}[\partial_{x_{n}}\pi_{\xi_{n}}^{+}\sigma_{-1}(\tilde{D}^{-1})
\times \partial_{\xi_{n}}^{2}\sigma_{-1}(\tilde{D}^{-1})](x_{0})\texttt{d}\xi_{n}\sigma(\xi')\texttt{d}x'.
\end{equation}
By  Lemma 3.1, Lemma 3.2,  we have
\begin{equation}
\partial_{x_{n}}\sigma_{-1}(\tilde{D}^{-1})(x_{0})|_{|\xi'|=1}=\frac{\sqrt{-1}\partial_{x_n}[\tilde{c}(\xi)](x_0)}{ab|\xi|^2}
-\frac{\sqrt{-1}\tilde{c}(\xi)h'(0)}{ab|\xi|^4}.
\end{equation}
 By the Cauchy integral formula we obtain
\begin{equation}
\pi_{\xi_{n}}^{+}[\frac{1}{(1+\xi_{n}^{2})^{2}}](x_{0})|_{|\xi'|=1}
  =\frac{1}{2\pi i}\lim_{u\rightarrow 0^{-}}\int_{\Gamma^{+}}\frac{\frac{1}{(\eta_{n}+i)^{2}(\xi_{n}+iu-\eta_{n})}}
{(\eta_{n}-i)^{2}}\texttt{d}\eta_{n}=-\frac{i\xi_{n}+2}{4(\xi_{n}-i)^{2}},
\end{equation}
and
\begin{equation}
\pi^+_{\xi_n}\left[\frac{\sqrt{-1}\partial_{x_n}c(\xi')}{|\xi|^2}\right](x_0)|_{|\xi'|=1}
=\frac{\partial_{x_n}[c(\xi')](x_0)}{2(\xi_n-i)}.
\end{equation}
Then
\begin{eqnarray}
&&\partial_{x_{n}}\pi_{\xi_{n}}^{+}\sigma_{-1}(\tilde{D}^{-1})(x_{0})|_{|\xi'|=1}\nonumber\\
&=&\frac{\partial_{x_n}[\tilde{c}(\xi')](x_0)}{2ab(\xi_{n}-i)}
+\frac{ih'(0)}{ab}\Big[ \frac{i\tilde{c}(\xi')}{4(\xi_{n}-i)} +\frac{\tilde{c}(\xi')+i\tilde{c}(dx_{n})}{4(\xi_{n}-i)^{2}}\Big].
\nonumber\\
&=&-\frac{\partial_{x_n}[\iota(\xi')](x_0)}{2a(\xi_{n}-i)}
+\frac{ih'(0)}{ab}\Big[ \frac{a(i\xi_{n}+2)\epsilon(\xi')}{4(\xi_{n}-i)^{2}} -\frac{b(i\xi_{n}+2)\iota(\xi')}{4(\xi_{n}-i)^{2}}
+\frac{ai\epsilon(dx_{n})}{4(\xi_{n}-i)^{2}} -\frac{bi\iota(dx_{n})}{4(\xi_{n}-i)^{2}}\Big].
\end{eqnarray}
By Lemma 3.1 and Lemma 3.2, we have
\begin{eqnarray}
 &&\partial_{\xi_{n}}^{2}\sigma_{-1}(\tilde{D}^{-1})(x_{0})|_{|\xi'|=1}\nonumber\\
&=&\frac{\sqrt{-1}}{ab}\left(-\frac{6\xi_n\tilde{c} (dx_n)+2\tilde{c}(\xi')}{|\xi|^4}+\frac{8\xi_n^2\tilde{c} (\xi)}{|\xi|^6}\right)\nonumber\\
&=&\frac{i(6\xi_{n}^{2}-2)\epsilon(\xi')}{b(1+\xi_{n}^{2})^{3}} -\frac{i(6\xi_{n}^{2}-2)\iota(\xi')}{a(1+\xi_{n}^{2})^{3}}
+\frac{i(2\xi_{n}^{3}-6\xi_{n})\epsilon(dx_{n})}{b(1+\xi_{n}^{2})^{3}}-\frac{i(2\xi_{n}^{3}-6\xi_{n})\iota(dx_{n})}{a(1+\xi_{n}^{2})^{3}}.
\end{eqnarray}
Combining (4.5) and (4.9), we have
\begin{equation}
\text{trace}[\partial_{x_{n}}\pi_{\xi_{n}}^{+}\sigma_{-1}(\tilde{D}^{-1})
\times \partial_{\xi_{n}}^{2}\sigma_{-1}(\tilde{D}^{-1})](x_{0})|_{|\xi'|=1}
=\frac{h'(0)}{ab}\frac{8(-1-3i\xi_{n}+3\xi_{n}^{2}+i\xi_{n}^{3})}{(\xi_{n}-i)^{2}(1+\xi_{n}^{2})^{3}}.
\end{equation}
Hence
\begin{eqnarray}
\text{ Case a (\text{II})}
&=&-\frac{1}{2}\int_{|\xi'|=1}\int_{-\infty}^{+\infty}
 \frac{h'(0)}{ab}\frac{8(-1-3i\xi_{n}+3\xi_{n}^{2}+i\xi_{n}^{3})}{(\xi_{n}-i)^{2}(1+\xi_{n}^{2})^{3}}\texttt{d}\xi_{n}\sigma(\xi')\texttt{d}x'\nonumber\\
&=&-\frac{1}{2} \frac{h'(0)}{ab}\Omega_{3}\int_{\Gamma^{+}}\frac{8(-1-3i\xi_{n}+3\xi_{n}^{2}+i\xi_{n}^{3})}{(\xi_{n}-i)^{2}(1+\xi_{n}^{2})^{3}}
\texttt{d}\xi_{n}\texttt{d}x'
\nonumber\\
&=&-\frac{1}{2} \frac{h'(0)}{a^{2}}\Omega_{3}\frac{2\pi i}{4!}
\Big[\frac{ 8(-1-3i\xi_{n}+3\xi_{n}^{2}+i\xi_{n}^{3})}{(\xi_{n}+i)^{3}}\Big]^{(4)}|_{\xi_{n}=i}\texttt{d}x'
 \nonumber\\
&=&-\frac{1}{2} \frac{h'(0)}{a^{2}}\Omega_{3}\frac{2\pi i}{4!}(-36i)\texttt{d}x'
 \nonumber\\
&=&-\frac{3}{2ab}\pi h'(0)  \Omega_{3}\texttt{d}x'.
\end{eqnarray}

\textbf{Case a (III)}: \ $r=-1, \ \ell=-1, \ j=|\alpha|=0, \ k=1$

 From (2.5) we have
 \begin{equation}
\text{ Case a (\text{III}) }=-\frac{1}{2}\int_{|\xi'|=1}\int_{-\infty}^{+\infty}\text{trace}\Big[\partial_{\xi_{n}}\pi_{\xi_{n}}^{+}
\sigma_{-1}(\tilde{D}^{-1})\times\partial_{\xi_{n}}\partial_{x_{n}}\sigma_{-1}(\tilde{D}^{-1})\Big](x_{0})
    \texttt{d}\xi_{n}\sigma(\xi')\texttt{d}x'.
 \end{equation}

From Lemma 3.1 and Lemma 3.2 we obtain
\begin{eqnarray}
 \partial_{\xi_n}\pi^+_{\xi_n}\sigma_{-1}(\tilde{D}^{-1})(x_0)|_{|\xi'|=1}  &=&-\frac{\tilde{c}(\xi')+i\tilde{c}(dx_n)}{2ab(\xi_n-i)^2} \nonumber\\
      &=&-\frac{a\epsilon(\xi')-b\iota(\xi') +i(\epsilon(dx_n)+\iota(dx_n))}{2ab(\xi_n-i)^2},
\end{eqnarray}
and
\begin{eqnarray}
&&\partial_{\xi_n}\partial_{x_n}\sigma_{-1}(\tilde{D}^{-1}(x_0)|_{|\xi'|=1}\nonumber\\
&=&\frac{-\sqrt{-1}h'(0)}{ab}
\left[\frac{\tilde{c} (dx_n)}{|\xi|^4}-4\xi_n\frac{\tilde{c} (\xi')+\xi_n\tilde{c} (dx_n)}{|\xi|^6}\right]-
\frac{2\xi_n\sqrt{-1}\partial_{x_n}\tilde{c} (\xi')(x_0)}{ab|\xi|^4}\nonumber\\
&=&\frac{2i\xi_{n}\partial_{x_n}[\iota(\xi')](x_0)}{a(1+\xi_{n}^{2})^{2}}
+\frac{4 i\xi_{n}h'(0)\epsilon(\xi')}{b(1+\xi_{n}^{2})^{3}} -\frac{4i\xi_{n}h'(0)\iota(\xi')}{a(1+\xi_{n}^{2})^{3}}
+\frac{( 3\xi_{n}^{2}-1)i h'(0)\epsilon(dx_{n})}{b(1+\xi_{n}^{2})^{3}}
-\frac{( 3\xi_{n}^{2}-1)i h'(0)\iota(dx_{n})}{a(1+\xi_{n}^{2})^{3}}.\nonumber\\
\end{eqnarray}
From (4.13) and (4.14), we have
\begin{equation}
\text{trace}\Big[\partial_{\xi_{n}}\pi_{\xi_{n}}^{+}
\sigma_{-1}(\tilde{D}^{-1})\times\partial_{\xi_{n}}\partial_{x_{n}}\sigma_{-1}(\tilde{D}^{-1})\Big](x_{0})|_{|\xi'|=1}
=\frac{h'(0)}{ab}\frac{8i(-i+3\xi_{n}+3i\xi_{n}^{2}-\xi_{n}^{3})}{(\xi_{n}-i)^{2}(1+\xi_{n}^{2})^{3}}.
\end{equation}
Then
\begin{eqnarray}
\text{ Case a (\text{III}) }
&=&-\frac{1}{2}\int_{|\xi'|=1}\int_{-\infty}^{+\infty}
 \frac{h'(0)}{ab}\frac{8i(-i+3\xi_{n}+3i\xi_{n}^{2}-\xi_{n}^{3})}{(\xi_{n}-i)^{2}(1+\xi_{n}^{2})^{3}}\texttt{d}\xi_{n}\sigma(\xi')\texttt{d}x'\nonumber\\
\nonumber\\
&=&-\frac{1}{2} \frac{h'(0)}{ab}\Omega_{3}\int_{\Gamma^{+}}\frac{8i(-i+3\xi_{n}+3i\xi_{n}^{2}-\xi_{n}^{3})}{(\xi_{n}-i)^{2}(1+\xi_{n}^{2})^{3}}
\texttt{d}\xi_{n}\texttt{d}x'\nonumber\\
&=&-\frac{1}{2} \frac{h'(0)}{a^{2}}\Omega_{3}\frac{2\pi i}{4!}(36i)\texttt{d}x'\nonumber\\
&=&\frac{3}{2ab}\pi h'(0)  \Omega_{3}\texttt{d}x'.
\end{eqnarray}

\textbf{Case b}: \ $r=-1, \ \ell=-2, \ k=j=|\alpha|=0$

From (2.5) and the Leibniz rule, we obtain
\begin{eqnarray}
\text{Case b}&=&-i\int_{|\xi'|=1}\int_{-\infty}^{+\infty}\text{trace}[\pi_{\xi_{n}}^{+}\sigma_{-1}(\tilde{D}^{-1})
 \times \partial_{\xi_{n}}\sigma_{-2}(\tilde{D}^{-1})](x_{0})\texttt{d}\xi_{n}\sigma(\xi')\texttt{d}x'    \nonumber\\
  &=&i\int_{|\xi'|=1}\int_{-\infty}^{+\infty}\text{trace}[\partial_{\xi_{n}}\pi_{\xi_{n}}^{+}\sigma_{-1}(\tilde{D}^{-1})
 \times \sigma_{-2}(\tilde{D}^{-1})](x_{0})\texttt{d}\xi_{n}\sigma(\xi')\texttt{d}x'
\end{eqnarray}
From Lemma 3.1, Lemma 3.2 and Lemma 3.4 we obtain
\begin{eqnarray}
&&\sigma_{-2}(\tilde{D}^{-1})(x_{0})\nonumber\\
&=&\frac{\tilde{c}(\xi)\tilde{p}_0(x_0)\tilde{c}(\xi)}{a^{2}b^{2}|\xi|^4}
+\frac{\tilde{c}(\xi)}{a^{2}b^{2}|\xi|^6}\tilde{c}(dx_n)
\Big[\partial_{x_n}[\tilde{c}(\xi')](x_0)|\xi|^2-\tilde{c}(\xi)h'(0)|\xi|^2_{\partial M}\Big]\nonumber\\
&=&\frac{1}{a^{2}b^{2}(1+\xi_n^{2})^2}\Big[\tilde{c}(\xi')\tilde{p}_0\tilde{c}(\xi')+\xi_n\tilde{c}(dx_n)\tilde{p_0}\tilde{c}(\xi')
+\xi_{n}\tilde{c}(\xi')\tilde{p_0}\tilde{c}(dx_{n})+\xi_n^{2}\tilde{c}(dx_n)\tilde{p}_0\tilde{c}(dx_n)\Big]\nonumber\\
&&+\frac{2\xi_{n}h'(0)\epsilon(\xi')}{b(1+\xi_{n}^{2})^{3}}
-\frac{2\xi_{n}h'(0)\iota(\xi')}{a(1+\xi_{n}^{2})^{3}}
+\frac{(\xi_{n}^{2}-1)h'(0)\epsilon(dx_{n})}{b(1+\xi_{n}^{2})^{3}} \nonumber\\
&&+\frac{(1-\xi_{n}^{2})h'(0)\iota(dx_{n})}{a(1+\xi_{n}^{2})^{3}}
+\frac{\xi_{n} \partial_{x_n}[\iota(\xi')](x_0)}{a(1+\xi_{n}^{2})^{2}}\nonumber\\
&&+\frac{ \epsilon(\xi')\epsilon(dx_{n})\partial_{x_n}[\iota(\xi')](x_0)}{b(1+\xi_{n}^{2})^{2}}
-\frac{b\xi_{n}\iota(\xi')\iota(dx_{n})\partial_{x_n}[\iota(\xi')](x_0)}{a^{2}(1+\xi_{n}^{2})^{2}}.
\end{eqnarray}
From (4.13) and (4.18), we obtain
\begin{equation}
\text{trace}[\partial_{\xi_{n}}\pi_{\xi_{n}}^{+}\sigma_{-1}(\tilde{D}^{-1})
 \times \sigma_{-2}(\tilde{D}^{-1})](x_{0})|_{|\xi'|=1}\nonumber\\
=\frac{h'(0)}{ab}\frac{2(-8i+12\xi_{n}+3i\xi_{n}^{2}+4\xi_{n}^{3}+3i\xi_{n}^{4})}{(\xi_{n}-i)^{2} (1+\xi_{n}^{2})^{3}}.
\end{equation}
Then similarly to computations of the case a), we have
\begin{equation}
{\rm case~ b)}=- \frac{6}{ab} \pi h'(0)  \Omega_{3}\texttt{d}x'.
\end{equation}

\textbf{Case c}: \ $r=-2, \ \ell=-1, \ k=j=|\alpha|=0$

From (2.5) we have
\begin{equation}
\text{ Case c}=-i\int_{|\xi'|=1}\int_{-\infty}^{+\infty}\text{trace}[\pi_{\xi_{n}}^{+}\sigma_{-2}(\tilde{D}^{-1})
       \times\partial_{\xi_{n}}\sigma_{-1}(\tilde{D}^{-1})](x_{0})\texttt{d}\xi_{n}\sigma(\xi')\texttt{d}x' .
\end{equation}
By the Leibniz rule, trace property and "++" and "-~-" vanishing
after the integration over $\xi_n$\cite{FGLS}\cite{GS}, then
\begin{eqnarray*}
& &\int^{+\infty}_{-\infty}{\rm trace}
[\pi^+_{\xi_n}\sigma_{-2}(\tilde{D}^{-1})\times
\partial_{\xi_n}\sigma_{-2}(\tilde{D}^{-2})]d\xi_n\\
&=&\int^{+\infty}_{-\infty}{\rm tr} [\sigma_{-2}(D^{-2})\times
\partial_{\xi_n}\sigma_{-1}(\tilde{D}^{-1})]d\xi_n
-\int^{+\infty}_{-\infty}{\rm tr}
[\pi^-_{\xi_n}\sigma_{-2}(\tilde{D}^{-1})\times
\partial_{\xi_n}\sigma_{-1}(\tilde{D}^{-1})]d\xi_n\\
&=&\int^{+\infty}_{-\infty}{\rm tr} [\sigma_{-2}(D^{-1})\times
\partial_{\xi_n}\sigma_{-1}(\tilde{D}^{-2})]d\xi_n-\int^{+\infty}_{-\infty}{\rm
tr} [\pi^-_{\xi_n}\sigma_{-2}(\tilde{D}^{-2})\times
\partial_{\xi_n}\pi^+_{\xi_n}\sigma_{-1}(\tilde{D}^{-1})]d\xi_n\\
&=&\int^{+\infty}_{-\infty}{\rm tr} [\sigma_{-2}(\tilde{D}^{-1})\times
\partial_{\xi_n}\sigma_{-1}(\tilde{D}^{-1})]d\xi_n-\int^{+\infty}_{-\infty}{\rm
tr} [\sigma_{-2}(\tilde{D}^{-1})\times
\partial_{\xi_n}\pi^+_{\xi_n}\sigma_{-1}(\tilde{D}^{-1})]d\xi_n\\
&=&\int^{+\infty}_{-\infty}{\rm tr} [
\partial_{\xi_n}\sigma_{-1}(\tilde{D}^{-1})\times\sigma_{-2}(\tilde{D}^{-1})]d\xi_n+\int^{+\infty}_{-\infty}{\rm
tr}[\partial_{\xi_n}\sigma_{-2}(\tilde{D}^{-2})\times
\pi^+_{\xi_n}\sigma_{-1}(\tilde{D}^{-1})]d\xi_n\\
&=&\int^{+\infty}_{-\infty}{\rm tr} [
\partial_{\xi_n}\sigma_{-1}(\tilde{D}^{-1})\times\sigma_{-2}(\tilde{D}^{-2})]d\xi_n+\int^{+\infty}_{-\infty}{\rm
tr}[\pi^+_{\xi_n}\sigma_{-1}(\tilde{D}^{-1})\times
\partial_{\xi_n}\sigma_{-2}(\tilde{D}^{-1})]d\xi_n.
\end{eqnarray*}
Then we have
\begin{equation}
{\rm {\bf case~ c)}}={\rm {\bf case~ b)}}-i\int_{|\xi'|=1}\int^{+\infty}_{-\infty}
{\rm tr}[\partial_{\xi_n}\sigma_{-1}(\tilde{D}^{-1})\times \sigma_{-2}(\tilde{D}^{-1})]d\xi_n\sigma(\xi')dx'.
\end{equation}
By  Lemma 3.1, Lemma 3.2, we obtain
  \begin{eqnarray}
&&\partial_{\xi_{n}}\sigma_{-1}(\tilde{D}^{-1})](x_{0})\nonumber\\
&=&\frac{\sqrt{-1}}{ab}\left(-\frac{2\xi_n^{2}\tilde{c}(dx_n)+2\xi_n\tilde{c}(\xi')}
{|\xi|^4}+\frac{\tilde{c}(dx_n)}{|\xi|^2}\right)\nonumber\\
&=& \frac{-2i\xi_{n}\epsilon(\xi')}{b(1+\xi_{n}^{2})^{2}} +\frac{2 i \xi_{n}\iota(\xi')}{2(1+\xi_{n}^{2})^{2}}
+\frac{i(1-\xi_{n}^{2})\epsilon(dx_{n})}{b(1+\xi_{n}^{2})^{2}} +\frac{i(\xi_{n}^{2}-1)\iota(dx_{n})}{a(1+\xi_{n}^{2})^{2}}.
\end{eqnarray}
 From (4.18) and (4.22), we have
\begin{equation}
-i\int_{|\xi'|=1}\int^{+\infty}_{-\infty}
{\rm tr}[\partial_{\xi_n}\sigma_{-1}(\tilde{D}^{-1})\times \sigma_{-2}(\tilde{D}^{-1})]d\xi_n\sigma(\xi')dx'
= \frac{12}{ab} \pi h'(0)  \Omega_{3}\texttt{d}x'.
\end{equation}
By (4.19), (4.21) and  (4.23), we have
\begin{equation}
{\rm case~ c)}=\frac{6}{ab} \pi h'(0)  \Omega_{3}\texttt{d}x'.
\end{equation}
Since $\Phi$  is the sum of the \textbf{case a, b} and \textbf{c}, so is zero.
Then we have
\begin{thm}\label{th:32}
Let M be a four dimensional compact connected manifold with the boundary $\partial M$ and the metric $g^{M}$ as above ,
and $\tilde{D}$ are the nonminimal de Rham-Hodge operators on $C^{\infty}({\Lambda^{*}(T^{*}M)})$, then
\begin{equation}
\widetilde{{\rm
Wres}}[(\pi^+\tilde{D}^{-1})^2]=\frac{8\Omega_4}{3ab}\int_MR{\rm dvol}_M,
\end{equation}
where $R$ is the scalar curvature and  $\Omega_{4}$ is the canonical volume of $S^{3}$.
\end{thm}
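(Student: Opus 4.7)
The plan is to follow the template of the proof of Theorem~3.5, applied now to $\widetilde{\mathrm{Wres}}[(\pi^+\tilde D^{-1})^2]$. By the boundary decomposition in (2.4)--(2.5), this noncommutative residue splits into an interior piece plus $\int_{\partial M}\Phi$. Since $[\sigma_{-4}(\tilde D^{-2})]|_M$ coincides with the symbol on a boundaryless manifold, Theorem~3.1 of \cite{Wa3} immediately delivers the interior contribution $\frac{8\Omega_4}{3ab}\int_M R\,\mathrm{dvol}_M$ displayed in (4.1). The whole point of the proof is therefore to show that $\int_{\partial M}\Phi = 0$.

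To compute $\Phi$, I would enumerate the five index configurations satisfying $r+\ell-k-j-|\alpha|=-3$ with $r,\ell\leq -1$, labeling them a(I), a(II), a(III), b, c exactly as in Section~3. Case a(I) vanishes for the same reason as there: by Lemma~3.2, in normal coordinates at $x_0\in\partial M$ the tangential derivatives $\partial_{x_i}[\tilde c(\xi)]$ and $\partial_{x_i}|\xi|^2$ are zero for $i<n$, so $\partial^\alpha_{x'}\sigma_{-1}(\tilde D^{-1})(x_0)=0$. For cases a(II) and a(III), I would plug in the expressions from Lemma~3.1 and Lemma~3.2 for $\partial_{x_n}\sigma_{-1}(\tilde D^{-1})$, $\partial_{\xi_n}^2\sigma_{-1}(\tilde D^{-1})$ and $\partial_{\xi_n}\partial_{x_n}\sigma_{-1}(\tilde D^{-1})$; evaluate $\pi^+_{\xi_n}$ through the Cauchy formula $\pi^+_{\xi_n}[(1+\xi_n^2)^{-2}]=-(i\xi_n+2)/[4(\xi_n-i)^2]$; and reduce the traces via the Clifford identities of Lemma~3.4 adapted to purely $\tilde c(\cdot)$ pairings. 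Each case collapses to a scalar residue of shape $\frac{2\pi i}{4!}\bigl[P(\xi_n)/(\xi_n+i)^3\bigr]^{(4)}\big|_{\xi_n=i}$ times $h'(0)\Omega_3\,dx'$, and I expect a(II) and a(III) to give opposite multiples of $\frac{3}{2ab}\pi h'(0)\Omega_3\,dx'$ that cancel.

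Cases b and c are the delicate ones. For case b I would insert the formula for $\tilde p_0$ from Lemma~3.1 into $\sigma_{-2}(\tilde D^{-1})$, apply $\pi^+_{\xi_n}$ termwise, pair with $\partial_{\xi_n}\sigma_{-1}(\tilde D^{-1})$, and extract the resulting residue. For case c I would use the trick of combining the Leibniz rule with the vanishing of ``$++$'' and ``$--$'' integrals after $\xi_n$-integration, which is standard in Boutet de Monvel's calculus \cite{FGLS}, to rewrite case c as case b plus the extra contribution $-i\int \mathrm{tr}[\partial_{\xi_n}\sigma_{-1}(\tilde D^{-1})\times\sigma_{-2}(\tilde D^{-1})]\,d\xi_n$. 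The main obstacle is bookkeeping the many Clifford cross-terms in $\sigma_{-2}(\tilde D^{-1})$ and checking that the residues come out to the precise values $-\frac{6}{ab}\pi h'(0)\Omega_3\,dx'$ in case b and $+\frac{6}{ab}\pi h'(0)\Omega_3\,dx'$ in case c, so that b and c also cancel against one another.

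Once the four non-trivial cases have cancelled pairwise, $\Phi\equiv 0$, and the theorem reduces to the interior computation alone, giving $\widetilde{\mathrm{Wres}}[(\pi^+\tilde D^{-1})^2]=\frac{8\Omega_4}{3ab}\int_M R\,\mathrm{dvol}_M$. Conceptually the key observation behind the vanishing of $\Phi$ is that, unlike the mixed product $\pi^+\tilde D^{-1}\circ\pi^+(\tilde D^*)^{-1}$ treated in Section~3, the square $\pi^+\tilde D^{-1}\circ\pi^+\tilde D^{-1}$ involves only the single Clifford-type symbol $\tilde c(\xi)$, and the resulting symmetry of the trace integrands forces all boundary residues to cancel.
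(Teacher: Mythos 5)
Your proposal follows the paper's own proof essentially step for step: the interior term via Theorem 3.1 of \cite{Wa3}, the five boundary cases with a(I) vanishing, a(II) and a(III) cancelling as $\mp\frac{3}{2ab}\pi h'(0)\Omega_3\texttt{d}x'$, and the remaining two cases cancelling as $\mp\frac{6}{ab}\pi h'(0)\Omega_3\texttt{d}x'$ via the Leibniz rule and the vanishing of the ``$++$'' and ``$--$'' integrals, so that $\Phi=0$. Apart from an immaterial swap of the labels b and c relative to the paper, this is the same argument and the predicted numerical values agree with those in Section 4.
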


\section{Lower dimensional volumes for three dimensional spin manifolds with boundary }

For an odd dimensional manifolds with boundary, as in Section
5-7 in \cite{Wa1}, we have the formula
\begin{equation}
\widetilde{{\rm Wres}}[\pi^+\tilde{D}^{-1} \circ \pi^+(\tilde{D^{*}})^{-1} ]=\int_{\partial M}\Phi.
\end{equation}
When $n=3$, then in (2.5), $ r-k-|\alpha|+l-j-1=-3,~~r,l\leq-1$, so
we get $r=l=-1,~k=|\alpha|=j=0,$
\begin{equation}
\Phi=\int_{|\xi'|=1}\int^{+\infty}_{-\infty}  {\rm trace}_{S(TM)}
[ \sigma^+_{-1}(\tilde{D}^{-1})(x',0,\xi',\xi_n)\times\partial_{\xi_n}\sigma_{-1}((\tilde{D^{*}})^{-1})(x',0,\xi',\xi_n)]d\xi_3\sigma(\xi')dx'.
\end{equation}
 By Lemma 3.1 and Lemma 3.2, we have
\begin{eqnarray}
\sigma^+_{-1}(\tilde{D}^{-1})|_{|\xi'|=1}(x_0)|_{|\xi'|=1}  &=&\frac{\tilde{c}(\xi')+i\tilde{c}(dx_n)}{2ab(\xi_n-i)} \nonumber\\
      &=&\frac{a\epsilon(\xi')-b\iota(\xi') +i(a\epsilon(dx_n)-b\iota(dx_n))}{2ab(\xi_n-i)},
\end{eqnarray}
and
\begin{eqnarray}
\partial_{\xi_n}\sigma_{-1}((\tilde{D^{*}})^{-1})(x_0)|_{|\xi'|=1}  &=&\frac{\sqrt{-1}\bar{c}(dx_n)}{ab(1+\xi_n^2)}
-\frac{2\sqrt{-1}\xi_n\bar{c}(\xi)}{ab(1+\xi_n^2)^2}\nonumber\\
      &=&\frac{\sqrt{-1}(b\epsilon(dx_n)-a\iota(dx_n))}{ab(1+\xi_n^2)}-\frac{2\sqrt{-1}\xi_n(b\epsilon(\xi)-a\iota(\xi)) }{ab(1+\xi_n^2)^2}.
\end{eqnarray}
For $n=3$, we take the coordinates as in Section 2.  Locally $S(TM)|_{\widetilde {U}}\cong
\widetilde {U}\times\wedge^{{\rm even}} _{\bf C}(2).$ Let $\{\widetilde{f_1},\widetilde{f_2}\}$ be an orthonormal basis of
$\wedge^{{\rm even}} _{\bf C}(2)$ and we will compute the trace under this basis.
By the relation of the Clifford action and ${\rm tr}{AB}={\rm tr }{BA}$, then we have the equalities:
\begin{equation}
{\rm tr}[\epsilon(\xi')\iota(\xi')]=4;~~{\rm tr}[\epsilon(dx_n)\iota(dx_n)]=4.
\end{equation}
Form (5.3) (5.4) and (5.5), we get
\begin{equation}
{\rm trace} [\sigma^+_{-1}(\tilde{D}^{-1})\times\partial_{\xi_n}\sigma_{-1}((\tilde{D^{*}})^{-1})](x_0)|_{|\xi'|=1}
=\frac{1}{a^{2}}\frac{ -2\xi_{n}^{3}+2\xi_{n}+4i\xi_{n} }{(\xi_{n}-i)(1+\xi_{n}^{2})^{2}}
+\frac{1}{b^{2}}\frac{ -2\xi_{n}^{3}+2\xi_{n}+4i\xi_{n} }{(\xi_{n}-i)(1+\xi_{n}^{2})^{2}}.
\end{equation}
 By (5.2) and (5.6)
and the Cauchy integral formula, we get
\begin{equation}
\Phi=(-\frac{1}{2}+\frac{i}{2})(\frac{1}{a^{2}}+\frac{1}{b^{2}})\pi\Omega_2{\rm vol}_{\partial M},
\end{equation}
where ${\rm vol}_{\partial M}$ denotes the canonical volume form of ${\partial M}$. Then we obtain
\begin{thm}
Let $M$ be a three dimensional
compact spin manifold with the boundary $\partial M$ and the metric
$g^M$ as in Section 2, and $\tilde{D}$ be the nonminimal de Rham-Hodge operators  on $\widehat{M}$, then
\begin{equation}
\widetilde{{\rm Wres}}[\pi^+\tilde{D}^{-1} \circ \pi^+(\tilde{D^{*}})^{-1} ]=(-\frac{1}{2}+\frac{i}{2})(\frac{1}{a^{2}}+\frac{1}{b^{2}})\pi\Omega_2{\rm vol}_{\partial M},
 \end{equation}
where ${\rm vol}_{\partial M}$ denotes the canonical volume of ${\partial M}.$
\end{thm}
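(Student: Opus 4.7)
The strategy is to exploit the odd-dimensional simplification: in dimension three the interior symbol integral in (2.4) vanishes (this is the content of Sections 5--7 in \cite{Wa1} for odd dimensions), so one has only the boundary term $\widetilde{{\rm Wres}}[\pi^+\tilde{D}^{-1}\circ\pi^+(\tilde{D^{*}})^{-1}]=\int_{\partial M}\Phi$ displayed as (5.1). First I would verify this reduction carefully, checking that the odd-dimensional cancellation of $\sigma_{-n}((\tilde D\tilde{D^{*}})^{-1})$ on the cosphere bundle still goes through for the nonminimal pair, since $\tilde D\tilde{D^{*}}$ is of Laplace type with leading symbol $a^{2}b^{2}|\xi|^{2}$ times a bundle endomorphism.

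Next I would enumerate which indices $(r,\ell,j,k,|\alpha|)$ contribute to $\Phi$ under the sum condition $r-k+|\alpha|+\ell-j-1=-n=-3$ with $r,\ell\le-1$. The only admissible choice is $r=\ell=-1$ and $j=k=|\alpha|=0$, which collapses $\Phi$ to the single integrand $\mathrm{trace}\bigl[\pi^+_{\xi_n}\sigma_{-1}(\tilde D^{-1})\,\partial_{\xi_n}\sigma_{-1}((\tilde{D^{*}})^{-1})\bigr]$, exactly (5.2). This is the payoff of working in dimension three: no lower-order symbols $q_{-2}$ enter.

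Then the calculation is the substitution of the two leading symbols from Lemma 3.1. Applying $\pi^+_{\xi_n}$ to $\sqrt{-1}\tilde c(\xi)/(ab|\xi|^{2})$ at $x_{0}$ with $|\xi'|=1$ gives the single-pole expression $\bigl(\tilde c(\xi')+i\tilde c(dx_n)\bigr)/\bigl(2ab(\xi_n-i)\bigr)$, and $\partial_{\xi_n}$ of the corresponding $\bar c$-expression yields the rational function in (5.4). One then expands in terms of $\epsilon$ and $\iota$, using $\tilde c(v)=a\epsilon(v)-b\iota(v)$ and $\bar c(v)=b\epsilon(v)-a\iota(v)$, and takes the trace on the even exterior algebra $\wedge^{\rm even}_{\mathbb{C}}(2)$ using $\mathrm{tr}[\epsilon(v)\iota(v)]=4$ while the cross terms between $\xi'$ and $dx_n$ vanish. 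The $a,b$ dependence then factors as $\tfrac{1}{a^{2}}+\tfrac{1}{b^{2}}$, with the mixed $ab$ coefficients cancelling because the two traces $\mathrm{tr}[\epsilon(\xi')\iota(\xi')]$ and $\mathrm{tr}[\epsilon(dx_n)\iota(dx_n)]$ are equal.

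Finally I would carry out the $\xi_n$ integral by residues, collecting the pole at $\xi_n=i$ (of order $3$ after multiplication), integrate the resulting constant over the unit cosphere $S^{2}\subset T^{*}_{x_{0}}\partial M$ to produce the factor $\Omega_{2}$, and integrate over $\partial M$ to produce $\mathrm{vol}_{\partial M}$. The main obstacle I anticipate is bookkeeping: keeping track of the asymmetry between $\tilde c$ and $\bar c$ (which interchange the roles of $a,b$ attached to $\epsilon$ and $\iota$) while verifying that only the $1/a^{2}$ and $1/b^{2}$ contributions survive, and computing the residue cleanly so that the peculiar complex coefficient $-\tfrac12+\tfrac{i}{2}$ emerges correctly. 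Assembling these pieces should give precisely the stated identity.
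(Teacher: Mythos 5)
Your proposal follows essentially the same route as the paper's Section 5: the odd-dimensional reduction to the boundary term alone, the observation that only $r=\ell=-1$, $j=k=|\alpha|=0$ survives the index condition, substitution of the two leading symbols from Lemma 3.1, the trace computation on $\wedge^{\rm even}_{\bf C}(2)$ via ${\rm tr}[\epsilon(v)\iota(v)]=4$, and the residue evaluation at $\xi_n=i$. The only cosmetic difference is your stated reason for the absence of $1/(ab)$ cross-terms — they drop out because $\epsilon(v)^2=\iota(v)^2=0$ kills the $ab$-coefficients in $\tilde c(v)\bar c(v)$, not because two traces coincide — but this does not affect the argument.
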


Authors: Jian Wang,wangj484@nenu.edu.cn, Yong Wang,wangy581@nenu.edu.cn
 and Aihui Sun, Sihui Chen,
(Addresses: School of Mathematics and Statistics, Northeast Normal University, Changchun, 130024, P.R.China)

\end{document}